\theoremstyle{plain}
\newtheorem{theorem}{Theorem}[section]
\newtheorem{lemma}{Lemma}[section]
\newtheorem{proposition}{Proposition}[section]
\theoremstyle{definition}
\newtheorem{remark}{Remark}[section]
\newtheorem{example}{Example}[section]
\newtheorem{fact}{Fact}
\begin{document}

\begin{center}
\begin{LARGE}
\textbf{Connected-Sum Decompositions of Surfaces with Minimally-Intersecting Filling Pairs}
\end{LARGE}
\linebreak

Mark Nieland

Department of Mathematics

State University of New York at Buffalo

232 Capen Hall, Buffalo, NY 14260

USA
\end{center}

\begin{abstract}
Let $ S_g $ be a closed surface of genus $ g $ and let $ (\alpha, \beta) $ be a filling pair on $ S_g $; then $ i(\alpha, \beta) \geq 2g-1 $, where $ i $ is the (geometric) intersection number.  Aougab and Huang demonstrated that (exponentially many) minimally-intersecting filling pairs exist on $ S_g $ when $ g > 2 $ by a construction which produces higher-genus surfaces with filling pairs as connected sums of lower-genus surfaces with filling pairs.  We present a generalization of their construction which provides an explicit, algebraic means of determining the homeomorphism class of the resulting pair, and a criterion for determining when a surface with minimally-intersecting filling pair admits a decomposition as a connected sum.
\end{abstract}
\section{Intoduction}

Let $ S_{g,b} $ be the surface with genus $ g $ and $ b $ boundary components (when $ b=0 $, we denote the surface by simply $ S_g $), and let $ 3g-3+b>1 $.  The \textit{curve complex} of this surface, $ \mathcal{C}(S_{g,b}) $, is a simplicial complex whose vertices are isotopy classes of essential simple closed curves (scc) in $ S_{g,b} $.  The vertices $ \alpha_1, \alpha_2, \ldots, \alpha_{k+1} $ span a $ k $-simplex if $ i(\alpha_m, \alpha_n)=0 $ for each $ m,n \in \lbrace 1,2, \ldots, k+1 \rbrace $, where $ i $ is the geometric intersection number.  In other words, there is a representative curve of each isotopy class such that each pair of curves is disjoint from one another.  We frequently equivocate between ``a vertex of the curve complex," ``an isotopy class of essential scc's," and ``a representative of such a class."  If $ b>0 $, the \textit{arc and curve complex}, $ \mathcal{AC}(S_{g,b}) $, is defined similarly:  its vertices are isotopy classes of essential scc's \textit{and} isotopy classes of essential proper arcs (isotopies do not necessarily fix the endpoints of an arc on $ \partial S_{g,b} $); simplex-spanning is defined in the same way.  Note that (unsurprisingly) the curve complex is a subcomplex of the arc and curve complex.

\indent The number $ \xi(S_{g,b})=3g-3+b $ is referred to as the \textit{complexity} of the surface.  If $ \lbrace \alpha_1, \alpha_2, \ldots, \alpha_n \rbrace $ is a set of pairwise disjoint (nonisotopic) essential scc's on $ S_{g,b} $, then $ n \leq \xi(S_{g,b}) $ (such a set of maximal size is known as a \textit{pants decomposition} of the surface).  We will be interested in the case where $ b=0 $ and $ \xi(S_g) \geq 2 $.

\indent Harvey introduced the curve complex in 1978's \cite{Ha}, and it has become an important tool in the study of the mapping class group (e.g., \cite{MM}), 3-manifolds (e.g., \cite{He}), and Teichm\"uller space (e.g., \cite{F}, and it's original use in \cite{Ha}).

\indent In particular, in the \textit{curve graph}, $ \mathcal{C}^1(S_{g,b}) $ (the 1-skeleton of the curve complex), two vertices are joined by an edge if they are disjoint.  If we define each edge to have length 1, then the curve graph is a metric space, where the distance $ d(\alpha, \beta) $ is the minimum length of an edge-path from $ \alpha $ to $ \beta $ (if such a path exists), or $ \infty $ (if it does not).  Since the curve complex is connected (first proved in \cite{Ha}), every pair of vertices has finite distance, and the curve graph is a \textit{geodesic space}$ - $i.e., there is a path of minimal length (a \textit{geodesic}) connecting any two vertices.

\indent Although distance is defined in terms of intersections, there is not a strict relationship between intersection number and distance.  Broadly speaking, large distance implies large intersection number (specifically, $ d(\alpha, \beta) \leq 2log_2 [i(\alpha, \beta)]+2 $ (\cite{He})); curves with large intersection number will generally have large distance, but this need not be the case.  There are some facts that we can assert in simple cases, however.

\begin{fact}
If $ i(\alpha, \beta) $ is 1 or 2, then $ d(\alpha, \beta)=2 $ (if we take an annular neighborhood of $ \alpha \cup \beta $, the components of its boundary will be disjoint from both curves, and at least one of them will be essential).
\end{fact}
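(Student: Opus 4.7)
The plan is to prove the two inequalities $d(\alpha,\beta)\ge 2$ and $d(\alpha,\beta)\le 2$ separately. The lower bound is immediate: since $i(\alpha,\beta)\ge 1$ the curves are not isotopic (so $d\ne 0$) and not disjoint (so $d\ne 1$), hence $d(\alpha,\beta)\ge 2$. All of the work therefore goes into producing a single essential simple closed curve $\gamma$, not isotopic to $\alpha$ or $\beta$, that is disjoint from both; such a $\gamma$ witnesses $d(\alpha,\gamma),d(\gamma,\beta)\le 1$ and hence $d(\alpha,\beta)\le 2$.

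To produce $\gamma$, I will follow the parenthetical hint and take a regular neighborhood $N$ of $\alpha\cup\beta$ in $S_g$. Viewing $\alpha\cup\beta$ as a graph with $i(\alpha,\beta)$ four-valent vertices and $2\,i(\alpha,\beta)$ edges, one computes $\chi(N)=-i(\alpha,\beta)$. In the case $i(\alpha,\beta)=1$, $N$ is a once-punctured torus with a single boundary curve; in the case $i(\alpha,\beta)=2$, $N$ is either a four-holed sphere or a twice-punctured torus (according to the algebraic intersection number of $\alpha$ and $\beta$). In either case $\partial N$ is a nonempty collection of simple closed curves, each of which is by construction disjoint from both $\alpha$ and $\beta$.

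The next step is to select a boundary component $\gamma\subset\partial N$ that is essential in $S_g$. Suppose instead that every component of $\partial N$ bounds a disk in $S_g\setminus\mathrm{int}(N)$. Capping off these disks converts $N$ into a closed surface, which must equal $S_g$ since $S_g$ is connected. Carrying this out in the three topological cases yields $S_g\cong S_1$, $S_0$, or $S_2$, respectively. Our hypothesis $\xi(S_g)\ge 2$, i.e.\ $g\ge 2$, rules out the first two, and in the $S_2$ case one observes directly that the boundary component of the once-punctured torus summand is essential in $S_2$ (equivalently, $\alpha\cup\beta$ does not fill $S_2$ when $i=1$). In every case at least one component $\gamma$ of $\partial N$ is essential.

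Finally, I need to verify that $\gamma$ is not isotopic to $\alpha$ or to $\beta$, as otherwise the identification in $\mathcal{C}(S_g)$ would force $d(\alpha,\beta)\le 1$, contradicting the lower bound already established. This is the step I expect to be the main obstacle: if $\gamma$ were isotopic to, say, $\alpha$, then $\gamma$ and $\alpha$ would cobound an annulus $A$ exterior to $N$, and $N\cup A$ would be a regular neighborhood of $\alpha\cup\beta$ with one fewer essential boundary component, eventually forcing $\beta$ to lie inside a tubular neighborhood of $\alpha$ and hence to be isotopic to $\alpha$, contradicting $i(\alpha,\beta)\ge 1$. Making this reduction precise in each topological type of $N$ is the only subtle point; once it is in hand, $\gamma$ is the required witness and the fact follows.
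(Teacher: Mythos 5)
Your proof follows exactly the route the paper's parenthetical suggests (take a regular neighborhood of $\alpha \cup \beta$, examine its boundary), and the skeleton is sound, but two details need repair.

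First, a computational slip in the case analysis. Capping off both boundary components of a twice-punctured torus produces a surface of Euler characteristic $-2 + 2 = 0$, i.e.\ $S_1$, not $S_2$. The ensuing discussion of ``the $S_2$ case'' — including the reference to a ``once-punctured torus summand'' and to $i(\alpha,\beta)=1$, which belongs to a different case entirely — is therefore both unnecessary and internally confused. With the corrected computation, all three topological types of $N$ cap off to $S_0$ or $S_1$, and both are excluded by the paper's standing hypothesis $\xi(S_g) \geq 2$, i.e.\ $g \geq 2$. So at least one component of $\partial N$ is essential, with no further casework.

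Second, the step you identify as ``the main obstacle'' — ruling out that $\gamma$ is isotopic to $\alpha$ or $\beta$ — is in fact immediate, and the annulus argument you sketch is both more elaborate than needed and not clearly correct as stated (nothing forces the cobounding annulus to lie exterior to $N$). The short version: $\gamma$ is by construction disjoint from $\beta$, so if $\gamma$ were isotopic to $\alpha$ then $\alpha$ would have a representative disjoint from $\beta$, giving $i(\alpha,\beta)=0$ and contradicting $i(\alpha,\beta) \geq 1$; the same argument applies with $\alpha$ and $\beta$ exchanged. With these two corrections your proof is complete and coincides with the paper's intended argument.
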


\begin{fact}
The pair $ (\alpha, \beta) $ is a \textit{filling pair} if no essential scc is disjoint from $ \alpha \cup \beta- $i.e., any essential scc must intersect at least one of the pair.  It's clear from the definition that $ d(\alpha, \beta)>2 $ if and only if $ (\alpha, \beta) $ is a filling pair.
\end{fact}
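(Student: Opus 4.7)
The plan is to prove the equivalent contrapositive: $d(\alpha,\beta) \leq 2$ if and only if $(\alpha,\beta)$ is not a filling pair. Since the claim is essentially an unpacking of the curve graph metric (as the parenthetical ``clear from the definition'' already hints), I do not anticipate a substantive obstacle; the only subtlety lies in the degenerate small-distance cases, where the ``witness curve'' disjoint from $\alpha \cup \beta$ may happen to coincide with $\alpha$ or $\beta$ itself rather than being a third, distinct vertex.

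For the forward direction I would split into cases according to $d(\alpha,\beta) \in \{0,1,2\}$. If $d = 0$ then $\alpha = \beta$ and $\alpha$ itself is an essential scc disjoint from $\alpha \cup \beta$. If $d = 1$ then $\alpha$ and $\beta$ admit disjoint representatives, so $\alpha$ again serves as the witness. If $d = 2$, the definition of the metric furnishes a vertex $\gamma$ of $\mathcal{C}^1(S_g)$ adjacent to both $\alpha$ and $\beta$; since vertices are essential scc's and adjacency records disjointness, $\gamma$ is the required witness. In every case we produce an essential scc disjoint from $\alpha \cup \beta$, so $(\alpha,\beta)$ fails to be filling.

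Conversely, if $(\alpha,\beta)$ is not a filling pair then by definition there is an essential scc $\gamma$ with $i(\gamma,\alpha) = i(\gamma,\beta) = 0$. Either $\gamma$ is isotopic to one of $\alpha,\beta$, in which case $i(\alpha,\beta) = 0$ and so $d(\alpha,\beta) \leq 1$, or $\gamma$ is a distinct vertex of $\mathcal{C}^1(S_g)$ and the edge-path $\alpha$--$\gamma$--$\beta$ gives $d(\alpha,\beta) \leq 2$. The step most requiring care is precisely this last dichotomy: the ``picture'' one has in mind is $\gamma$ distinct from both vertices giving $d = 2$, and I want to be explicit that the degenerate possibility ($\gamma$ isotopic to $\alpha$ or $\beta$) is what forces the inequality down to $d \leq 1$ rather than being an edge case that breaks the argument.
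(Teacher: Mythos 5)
Your argument is correct and is exactly the unpacking the paper intends when it calls the equivalence ``clear from the definition'': $d(\alpha,\beta)\leq 2$ happens precisely when some essential scc has zero geometric intersection with both curves, which is the negation of filling, with the degenerate cases $d\in\{0,1\}$ handled as you do. The only nitpick is in those degenerate cases: $\alpha$ ``itself'' is not literally disjoint from $\alpha\cup\beta$, so the witness should be an isotopic pushoff of $\alpha$ (equivalently, one should read disjointness as $i(\cdot,\alpha)=i(\cdot,\beta)=0$), which is harmless given the paper's stated convention of equivocating between classes and representatives.
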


\begin{fact}
In a closed surface $ S_g $, $ (\alpha, \beta) $ is a filling pair if and only if $ S_g \smallsetminus (\alpha \cup \beta) $ is a union of disks.  These disks are shaped like polygons with alternating $ \alpha $-and $ \beta $-edges; the curves partition each other into arcs and, when we cut the surface apart into disks, we produce two copies of each arc.  If one of these disk-regions has two edges which are copies of the same arc then, by connecting them, we produce an essential scc which intersects one of the pair $ (\alpha, \beta) $ once and is disjoint from the other:  by Fact 2, it follows that $ d(\alpha, \beta)=3 $.
\end{fact}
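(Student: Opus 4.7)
The plan is to break the Fact into its three assertions and handle each in turn: the biconditional characterizing filling pairs via disk complements, the alternating polygonal structure of each complementary region, and the distance bound when some complementary polygon has a repeated edge.

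For the biconditional, I would argue each direction separately. If every component of $S_g \smallsetminus (\alpha \cup \beta)$ is an open disk, then any simple closed curve disjoint from $\alpha \cup \beta$ lies in one such component; being contained in a disk, it is null-homotopic and hence not essential, so $(\alpha,\beta)$ fills by definition. Conversely, if some complementary component $R$ fails to be a disk, then the compact surface obtained by taking the closure of $R$ (with the intersection pattern on its boundary) has either positive genus or more than one boundary component, and in either case carries an essential simple closed curve in its interior; this curve is disjoint from $\alpha \cup \beta$ and essential in $S_g$, contradicting the filling hypothesis.

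The alternating-edge claim is a direct transversality observation: because $\alpha$ and $\beta$ are in minimal position and meet transversely, a small neighborhood of each intersection point looks like two crossing lines cutting the plane into four sectors, each of which has one $\alpha$-edge and one $\beta$-edge in its boundary. Walking around the boundary of a complementary polygon, each corner is such an intersection, so the boundary edges alternate between $\alpha$-arcs and $\beta$-arcs. In particular each polygon has an even number of sides. The statement that each arc appears twice among all polygon edges is simply that a 1-dimensional face of this cell decomposition has exactly two sides (possibly in the same polygon).

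For the distance claim, suppose a complementary disk $D$ has two of its boundary edges identified with the same arc $a \subset \alpha$. Choose interior points $p,q$ on these two edges and connect them by an embedded arc $\gamma$ inside $D$; upon re-gluing to recover $S_g$, the endpoints $p$ and $q$ are identified to a single point of $a$, so $\gamma$ closes up to a simple closed curve $\gamma' \subset S_g$ meeting $\alpha$ transversely in exactly one point and disjoint from $\beta$. The key remaining step, and the one I expect to require the most care, is verifying that $\gamma'$ is essential: since $i(\gamma',\alpha)=1$, the algebraic intersection number $\hat{\imath}(\gamma',\alpha)$ is odd and nonzero, so $\gamma'$ represents a nontrivial homology class and is in particular not null-homotopic. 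With $\gamma'$ essential, Fact 2 gives $d(\alpha,\gamma')=2$ and disjointness gives $d(\gamma',\beta)\le 1$, so the triangle inequality yields $d(\alpha,\beta)\le 3$; combined with Fact 2's lower bound $d(\alpha,\beta)>2$ for filling pairs, this forces $d(\alpha,\beta)=3$.
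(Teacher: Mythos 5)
Your proposal is correct and follows essentially the same route as the paper, which states this Fact with only an embedded sketch: connect the two copies of the repeated arc inside the disk-region to get a simple closed curve meeting one of $\alpha,\beta$ once and missing the other, note it is essential (your homological intersection argument is the standard way to see this), and then combine Fact 1, Fact 2, and the triangle inequality to pin $d(\alpha,\beta)=3$. Your added details on the disk-complement biconditional and the alternating-edge structure are consistent with (and somewhat more explicit than) what the paper takes for granted.
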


\begin{fact}
It can be shown that, for a filling pair on $ S_g $, we have $i(\alpha, \beta) \geq 2g-1 $ and that, in the case of minimal intersection number, $ d(\alpha, \beta)=3 $ (see Proposition 3.1).
\end{fact}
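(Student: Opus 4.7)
The plan is to extract the CW structure on $S_g$ that Fact 3 implicitly provides, then do Euler-characteristic bookkeeping. Taking $\alpha$ and $\beta$ in minimal position (so the intersection count in the picture really equals $i(\alpha,\beta)$), the $i := i(\alpha,\beta)$ intersection points are the $0$-cells, the $i$ arcs of $\alpha$ together with the $i$ arcs of $\beta$ are the $1$-cells, and by Fact 3 the complementary regions are open $2$-cells, say $F$ of them. Thus $V = i$, $E = 2i$, and
\[
V - E + F = 2 - 2g \quad \Longrightarrow \quad F = 2 - 2g + i.
\]
Since $\alpha \cup \beta$ is a compact $1$-complex in $S_g$, its complement is nonempty, so $F \geq 1$, which immediately yields $i \geq 2g-1$.

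For the second assertion, suppose the bound is sharp, $i = 2g-1$. Then $F = 1$: the complement is a single polygonal disk $D$. In any CW decomposition of a closed surface, summing the side-counts of the faces counts each $1$-cell twice, so the boundary of $D$ traverses $2E = 4i = 8g-4$ sides drawn from only $2i = 4g-2$ distinct arcs of $\alpha \cup \beta$. By the pigeonhole principle every arc must appear as exactly two sides of $D$, so in particular $D$ contains two boundary edges that are copies of the same arc. Fact 3 then delivers an essential scc meeting one of $\{\alpha,\beta\}$ once and the other not at all, and Fact 2 upgrades this to $d(\alpha,\beta) = 3$.

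I do not expect a serious obstacle. Once Fact 3 licenses treating $\alpha \cup \beta$ as the $1$-skeleton of a genuine CW structure on $S_g$, everything is pure Euler-characteristic accounting plus a one-line pigeonhole. The only point that deserves a moment of care is ensuring we use representatives realizing the geometric intersection number (equivalently, in minimal position); this is automatic from the definition of $i(\alpha,\beta)$ and is what rules out the bigon pathology that would otherwise let spurious $1$-cells inflate the edge count and weaken the Euler bound.
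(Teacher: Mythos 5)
Your proof is correct and follows essentially the same route as the paper: the lower bound $i(\alpha,\beta)\geq 2g-1$ comes from the same Euler-characteristic count ($V=i$, $E=2i$, $F\geq 1$ since the complement is nonempty), and the distance claim is obtained by observing that minimality forces $F=1$ so the complement is a single $(8g-4)$-gon, whereupon Fact 3 produces the curve meeting one of $\alpha,\beta$ once and missing the other. The one thing you add is spelling out the (immediate) observation that with a single face every arc of $\alpha\cup\beta$ necessarily contributes both of its two copies to the boundary of that face, which the paper leaves implicit when it invokes Fact 3.
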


\indent Let $ F_g $ denote the surface $ S_g $ together with a minimally-intersecting filling pair.  Let Mod($ S_g $) denote the \textit{mapping class group} of $ S_g $.  This is the group of homotopy \textit{classes} of orientation-preserving homeomorphisms (\textit{mappings}) on $ S_g $.  This group acts on $ S_g $ and we identify curves in the surface according to this action$ - $thus \textit{an} $ F_g $ is $ S_g $ with a particular filling pair, and a homeomorphically different filling pair on the same surface is a \textit{different} $ F_g $.  In their 2015 paper \cite{AH}, Aougab and Huang addressed the following questions:

\begin{enumerate}
\item For which genera $ g $ does an $ F_g $ exist?
\item How many $ F_g $s are there for a given $ g $?
\end{enumerate}

\indent They showed that an $ F_g $ exists for every genus $ g > 2 $, and then demonstrated upper and lower bounds on the number of distinct $ F_g $s.  Specifically, they proved the following:

\begin{theorem}
Let $ N(g) $ be the number of $ F_g $s (the number of Mod($ S_g $)-orbits of minimally-intersecting filling pairs on $ S_g $).  Then there exists a function $ f(g)=O(\dfrac{3^{{\frac{g}{2}}}}{g^2}) $ such that $ f(g) \leq N(g) \leq 2^{2g-2} (4g-5) (2g-3)! $ for all $ g>2 $.
\end{theorem}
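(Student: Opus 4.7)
The plan is to treat the upper and lower bounds separately: the upper bound by a combinatorial counting argument on polygonal identifications, and the lower bound by a constructive argument producing exponentially many distinct $F_g$s.

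For the upper bound, I would use Fact 3 combined with an Euler characteristic calculation. Viewing $S_g$ as a CW complex with $V = i(\alpha,\beta) = 2g-1$ vertices (the intersection points), $E = 2\,i(\alpha,\beta) = 4g-2$ edges (the arcs of $\alpha$ and $\beta$ cut apart at intersections), and $F$ faces gives
\[
F = \chi(S_g) - V + E = (2-2g) - (2g-1) + (4g-2) = 1.
\]
Thus $S_g \smallsetminus (\alpha \cup \beta)$ is a \emph{single} polygon, which must have $2E = 8g-4$ sides alternating between $\alpha$- and $\beta$-arcs. An $F_g$ is then encoded by a side-pairing of this $(8g-4)$-gon (matching $\alpha$-sides with $\alpha$-sides and $\beta$-sides with $\beta$-sides), subject to the constraints that the resulting quotient has the correct number of vertex classes and is orientable. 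I would bound the raw pairing count by $[(4g-3)!!]^2$, impose the vertex-count and orientability constraints to sharply reduce this count, and then quotient by the symmetries of the polygon (cyclic rotations, reflections, and the $\alpha \leftrightarrow \beta$ interchange) which model the action of $\mathrm{Mod}(S_g)$ on homeomorphism classes. The target bound $2^{2g-2}(4g-5)(2g-3)!$ should fall out after careful bookkeeping on these symmetries.

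For the lower bound, I would construct many $F_g$s by iterating a connected-sum operation. Given $F_{g_1}$ and $F_{g_2}$ with $g_1 + g_2 = g$, I excise a small disk from each surface, chosen to meet the filling pair in a prescribed pattern of arcs, and reglue along the boundary so that the $\alpha$- and $\beta$-curves extend to a filling pair on $S_g$. Starting from an $F_3$ (whose existence is established by direct construction) and recursing, one obtains an exponentially growing family of candidate $F_g$s. The number of inequivalent gluing choices at each step is responsible for the $3^{g/2}$ growth, while the $g^2$ denominator absorbs the polynomial overcounting coming from the fact that a single $F_g$ may arise from several connected-sum decompositions.

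The main obstacle will be showing that enough of the constructed $F_g$s represent genuinely distinct $\mathrm{Mod}(S_g)$-orbits. Two different recursive constructions may well produce homeomorphic filling pairs, and so a computable invariant is needed to distinguish them --- ideally one that can be read off directly from the polygonal identification pattern used in the upper-bound argument. Establishing such an invariant appears to be the technical heart of the lower bound, and indeed the stated program of Nieland's paper --- producing an explicit algebraic encoding of the homeomorphism class of a filling pair, together with a criterion for detecting connected-sum decompositions --- is precisely what would be required to make this count sharp.
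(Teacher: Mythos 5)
Your upper-bound outline is in the spirit of the paper's (and Aougab--Huang's) argument: the Euler characteristic computation showing the complement is a single $(8g-4)$-gon is correct, and encoding the pair by the side-identification pattern of that polygon is exactly what the filling permutation $\sigma \in \Sigma_{8g-4}$ does, with your ``quotient by polygon symmetries'' played in the paper by conjugation by the group $T$ of twisting permutations (Proposition 1.1). But you leave the decisive step schematic: the bound $2^{2g-2}(4g-5)(2g-3)!$ does not drop out of raw side-pairings modulo symmetry (your $[(4g-3)!!]^2$ is far too coarse); it comes from the algebraic characterization $\sigma Q^{4g-2}\sigma=\tau$ of filling permutations, which rigidly constrains $\sigma$, so that the maximal number of admissible permutations divided by the minimal order of $T$ gives the stated count.

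The genuine gap is in the lower bound. You propose to form $F_{g_1}\ \sharp\ F_{g_2}$ with $g_1+g_2=g$ by excising a disk from each and regluing, but the intersection arithmetic fails: deleting a disk neighborhood of an intersection removes one crossing from each piece, so the glued curves have $(2g_1-2)+(2g_2-2)=2g-4$ intersections, strictly below the minimum $2g-1$ of Proposition 3.1, hence they cannot form a filling pair on $S_g$ at all. The construction that works attaches not another $F$ but a special genus-2 piece $Z$ whose filling pair has $6=2\cdot 2+2$ intersections (more than minimal on $S_2$) and possesses a distinguished intersection adjacent to all four complementary regions; deleting that vertex from $Z$ and any vertex from $F_g$ and gluing makes the complement a single disk, yielding an $F_{g+2}$ (Lemma 3.1). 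Each attachment adds four new intersections and hence more choices of attachment point at the next stage, which is what drives the exponential growth; moreover the induction must start from $F_1$ and $F_4$, since there is no $F_2$, so starting from an $F_3$ alone cannot reach even genera. Finally, the distinguishing invariant you correctly flag as the technical heart is supplied by the filling permutation itself: Mod$(S_g)$-orbits correspond to $T$-conjugacy classes, and the fact that $Z$-pieces can always be removed cleanly (two distinct $Z$-pieces overlap only along their attaching curves) is what controls the overcounting from multiple decompositions. Without the special piece $Z$ and this permutation-level invariant, your lower-bound argument does not get off the ground.
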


\indent To prove these bounds, they introduced a means of representing the filling pair $ (\alpha, \beta) $ on $ F_g $ by a permutation $ \sigma \in \Sigma_{8g-4} $, where $ \Sigma_{8g-4} $ is the symmetric group on $ 8g-4 $ symbols (this $ \sigma $ is known as a \textit{filling permutation} of $ (\alpha, \beta) $).  It will be useful to note here that a defining feature of $ \sigma $ is that it satisfies the equation

\begin{equation}
\sigma Q^{4g-2} \sigma=\tau
\label{permeqn}
\end{equation}

\noindent where $ Q $ and $ \tau $ are permutations defined in Section 2 below.  They then demonstrated that the Mod($ S_g $)-orbit of $ (\alpha, \beta) $ corresponds to a special conjugacy class of $ \sigma $.  More precisely:

\begin{proposition}
Let $ (\alpha, \beta) $ and $ (\alpha', \beta') $ be minimally-intersecting filling pairs on $ S_g $, and let $ \sigma, \sigma' \in \Sigma_{8g-4} $ be filling permutations for the two pairs, respectively.  There is a subgroup $ T \leq \Sigma_{8g-4} $ such that $ (\alpha, \beta) $ and $ (\alpha', \beta') $ belong to the same Mod($ S_g $)-orbit if and only if $ \sigma $ and $ \sigma' $ are conjugate by an element of $ T $. 
\end{proposition}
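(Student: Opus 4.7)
The plan is to treat the filling permutation as an essentially complete combinatorial invariant of the filling pair, and to show that the subgroup $T$ absorbs exactly the labeling ambiguity inherent in passing from a pair $(\alpha,\beta)$ to a particular $\sigma$. By Fact 3, $S_g \smallsetminus (\alpha \cup \beta)$ is a disjoint union of polygonal disks with edges alternating between $\alpha$-arcs and $\beta$-arcs, and the $2g-1$ intersection points cut each curve into $4g-2$ arcs (so $8g-4$ arc-endpoints or oriented arcs, which is the data $\sigma$ permutes). The equation $\sigma Q^{4g-2} \sigma = \tau$ should be read as the condition that a permutation in $\Sigma_{8g-4}$ actually comes from a filling pair --- that is, that the cyclic data around intersection points ($Q$) assemble coherently with the global gluing data ($\sigma$) into the combinatorial pattern recorded by $\tau$.

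For the forward direction, I would lift the mapping class carrying $(\alpha,\beta)$ to $(\alpha',\beta')$ to an orientation-preserving homeomorphism $\phi$ with $\phi(\alpha \cup \beta) = \alpha' \cup \beta'$. Such a $\phi$ sends intersection points to intersection points and arcs to arcs, respecting the cyclic order along each curve and the cyclic order at each intersection; hence, once labelings have been chosen to produce $\sigma$ and $\sigma'$, the map $\phi$ induces a well-defined $t \in \Sigma_{8g-4}$ with $t \sigma t^{-1} = \sigma'$. One then checks that $t$ respects the structural features of the labeling convention (cyclic order on each of $\alpha,\beta$, orientation of each curve, and the $\alpha \leftrightarrow \beta$ distinction), which is precisely membership in $T$.

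For the reverse direction, suppose $t \in T$ with $t \sigma t^{-1} = \sigma'$. Reconstruct $S_g$ and its filling pair from each permutation: the complementary polygons are determined by the orbit structure of $\sigma$ and $Q$, and their edge-identifications assemble into a surface-with-filling-pair which is uniquely determined up to homeomorphism by this combinatorial data. The condition $t \in T$ means that $t$ is a relabeling of the $8g-4$ symbols that corresponds to a legitimate alternative choice of labeling on a \emph{single} combinatorial pair --- i.e., it encodes a change of basepoint, orientation, or curve-label rather than a genuine combinatorial difference. Consequently the identity on the underlying polygonal complex, re-expressed through $t$, descends to an orientation-preserving self-homeomorphism of $S_g$ carrying $(\alpha,\beta)$ to $(\alpha',\beta')$.

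The main obstacle is pinning down $T$ explicitly and verifying that it captures \emph{all and only} the labeling gauge. On one side I must check that every homeomorphism-induced relabeling is in $T$ (ruling out, e.g., an odd reshuffling that would correspond to an orientation-reversing map or a genuine combinatorial reordering); on the other side I must check that every $t \in T$ is actually realized by some homeomorphism, which amounts to showing that the four natural symmetries --- cyclic shifts along $\alpha$, cyclic shifts along $\beta$, orientation reversal of each curve, and the $\alpha \leftrightarrow \beta$ swap --- are compatible with equation~(\ref{permeqn}) and generate a subgroup of $\Sigma_{8g-4}$ acting faithfully by conjugation on the collection of filling permutations. This bookkeeping, carried out in the shadow of the constraint $\sigma Q^{4g-2} \sigma = \tau$, is where the technical weight of the argument lies.
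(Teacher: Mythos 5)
Your proposal is correct and follows essentially the same route as the paper: an orientation-preserving homeomorphism acts simplicially on the polygonal decomposition, so the only discrepancy between $\sigma$ and $\sigma'$ is the labeling gauge (choice of initial arcs, orientations, and the $\alpha \leftrightarrow \beta$ swap), which is exactly conjugation by the group $T=\langle \kappa,\delta,\eta,\mu\rangle$ the paper exhibits, while your converse is the paper's implicit argument resting on reconstructing the surface and pair from the filling permutation (Theorem 2.1). The bookkeeping you defer --- writing down the generators of $T$ and checking that they realize all and only the admissible relabelings --- is precisely the content of the paper's proof of Proposition 2.1.
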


\indent This $ T $ is independent of the pair $ (\alpha, \beta) $:  all homeomorphic filling pairs have filling permutations which are conjugate by an element of $ T $.  The upper bound, then, is simply the maximum number of such conjugacy classes:  the maximum number of possible filling permutations divided by the minimum possible order of $ T $.

\indent To prove the lower bound (as well as the mere existence of \textit{any} $ F_g $s), Aougab and Huang produced an explicit construction.  This construction involves producing a higher-genus $ F $ from one of lower genus by iteratively attaching ``$ Z $-pieces":  copies of a fixed surface $ Z $ of genus 2, together with a filling pair that intersect six times.  See Figure \ref{Zfig}.

\begin{figure}[htb]
\begin{center}
\includegraphics[scale=.2]{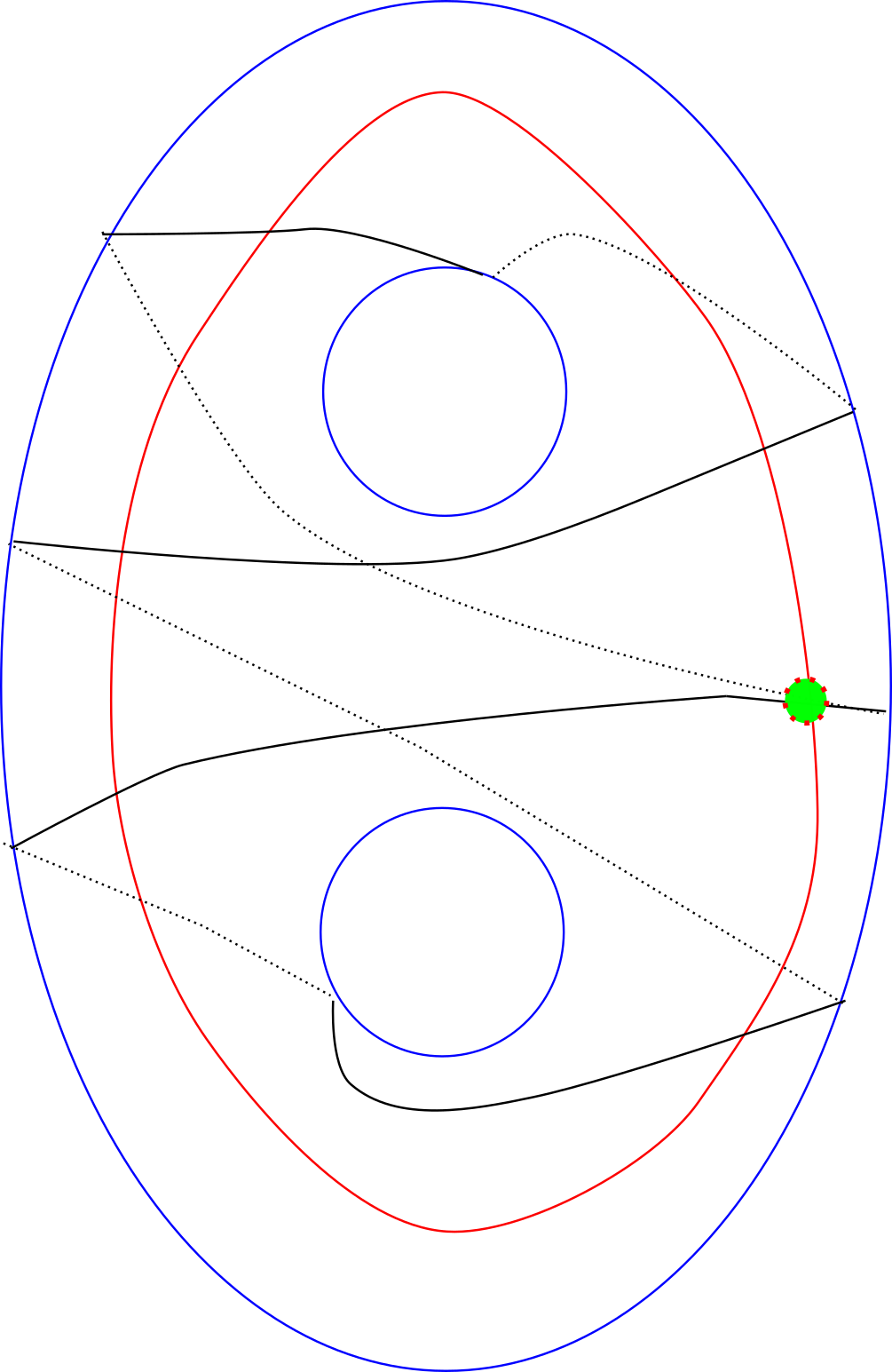}
\caption{$ Z $ with curves $ \alpha $ (shown in red) and $ \beta $ (shown in black).  The green vertex is adjacent to all four regions.\label{Zfig}}
\end{center}
\end{figure}

\indent A $ Z $-piece is attached to an $ F_g $ by connected sum:  delete a disk neighborhood from each surface, and then identify the boundary curves produced by this deletion.  This produces the surface $ S_{g+2} $.  Producing a minimally-intersecting filling pair on this new surface requires a particular choice of the deleted disks.  The filling pair on $ Z $ has a distinguished intersection which is adjacent to each of the (four) complementary regions of the filling pair (throughout this paper, we will refer to this intersection as the ``green vertex").  By deleting a disk neighborhood of the green vertex from $ Z $ and a disk neighborhood of \textit{any} intersection of $ F_g $, we create boundary in the curves:  they become arcs.  When we identify the boundaries of the surfaces (circles), we simultaneously identify the boundaries of these arcs (endpoints on the circles).  This produces a filling pair on $ S_{g+2} $ with exactly the minimum number of intersections on the new surface$ - $i.e., it produces an $ F_{g+2} $.  Figures \ref{cutsurfacesfig} and \ref{F3fig} illustrate the case $ g=1 $.

\begin{figure}[htb]
\begin{center}
\includegraphics[scale=.18]{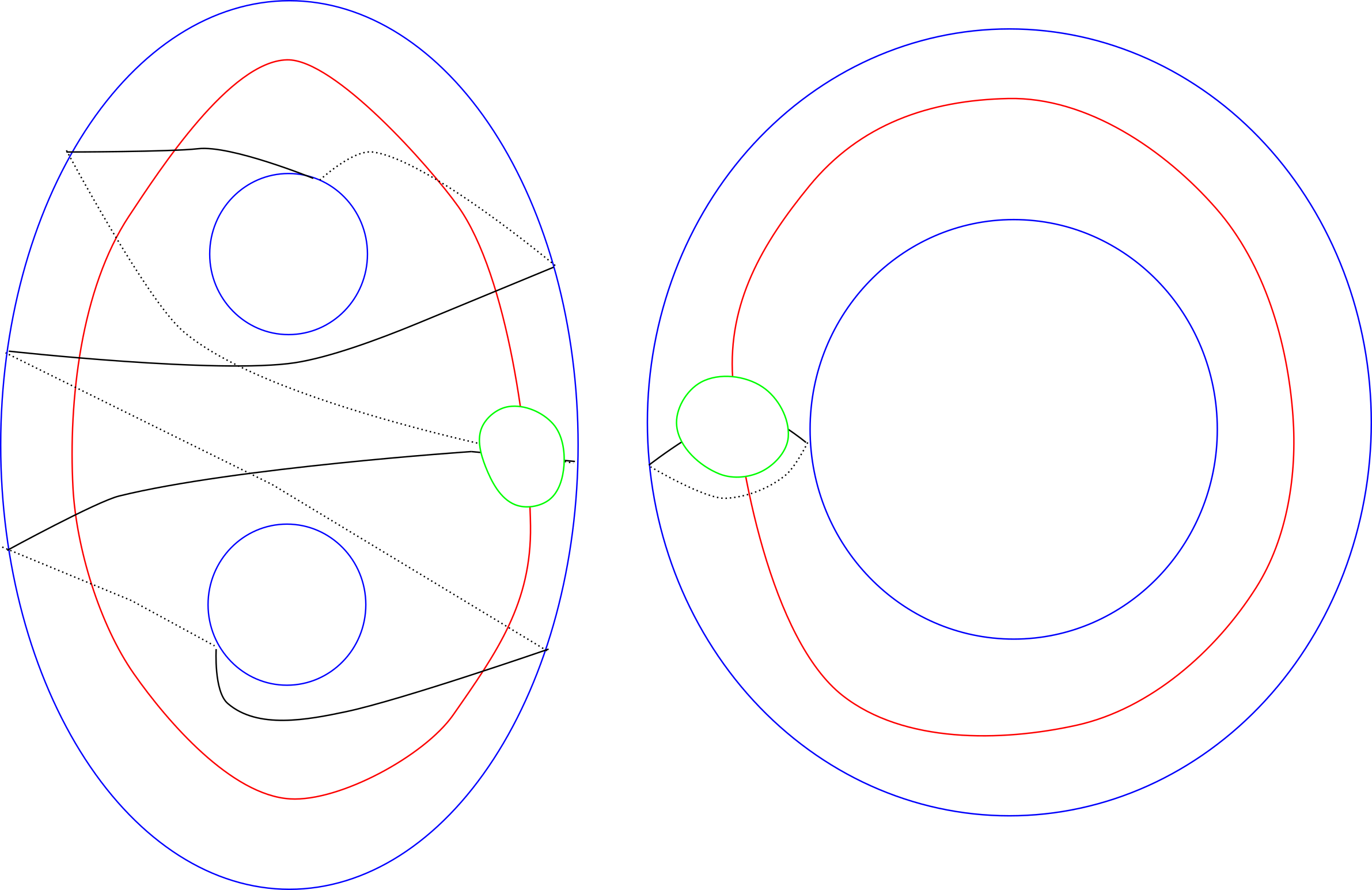}
\caption{$ Z $ and $ F_1 $ with appropriate disks removed.  The curves $ \alpha $ and $ \beta $ on $ Z $ and $ F_1 $ are now arcs.\label{cutsurfacesfig}}
\end{center}
\end{figure}

\begin{figure}[htb]
\begin{center}
\includegraphics[scale=.19]{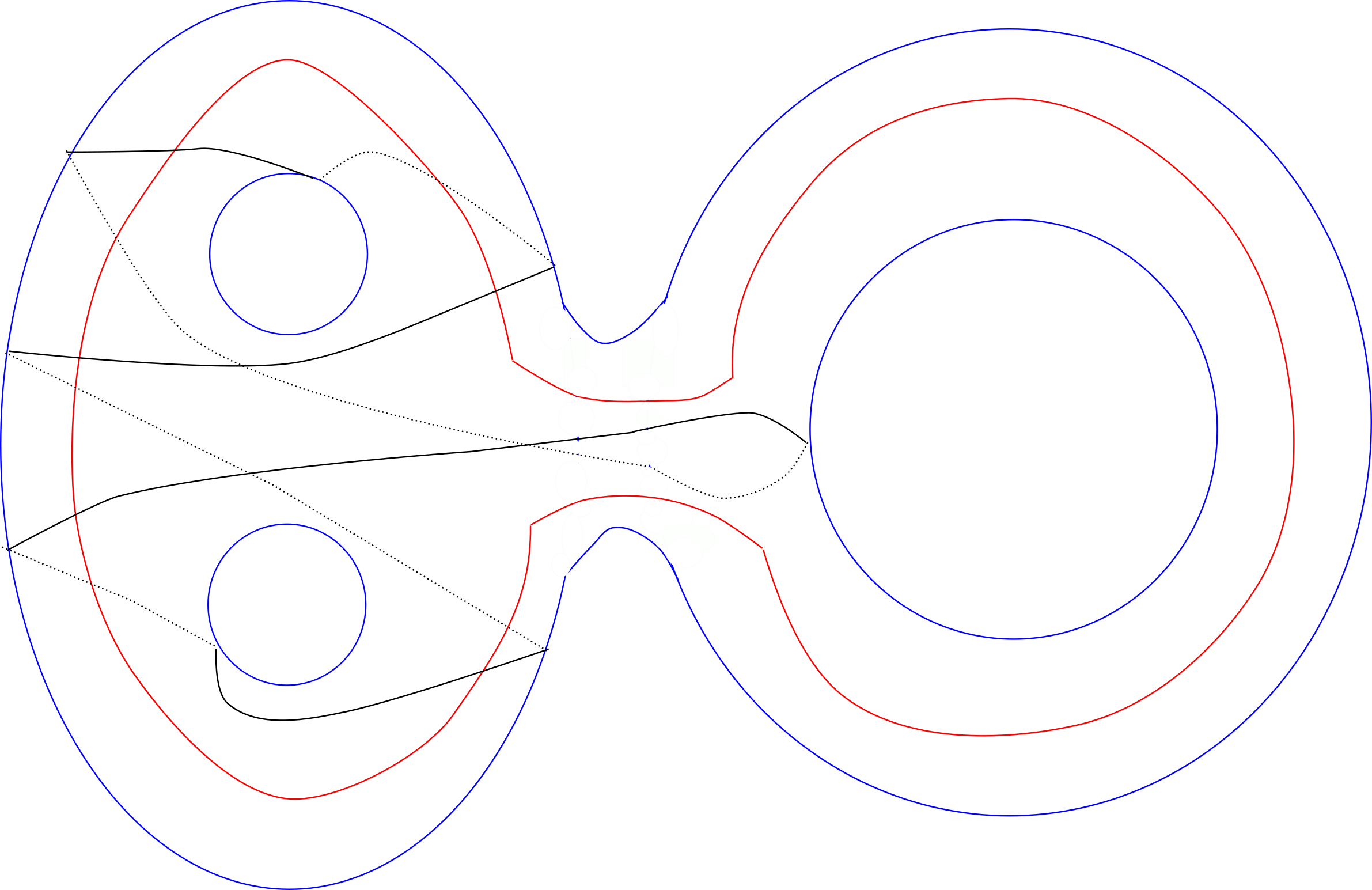}
\caption{Arcs of the appropriate color on each surface have had their endpoints identified, producing a new pair of scc's on a new closed surface.\label{F3fig}}
\end{center}
\end{figure}

\indent This splits the existence problem into two cases:  odd genus and even genus.  To show, for arbitrary $ g $, that an $ F_g $ exists, it suffices to show that an $ F_1 $ exists (if $ g $ is odd) or an $ F_2 $ exists (if $ g $ is even); it follows by induction that an $ F_g $ exists.  There is an obvious candidate on $ S_1- $namely, the longitudinal and meridional curves of the torus, which intersect $ 2(1)-1=1 $ time$ - $and so existence follows for all odd genera.  It turns out that there is no $ F_2- $if $ (\alpha, \beta) $ is a filling pair on $ S_2 $, then $ i(\alpha, \beta) \geq 4>3=2(2)-1- $however, this is the only defect in the result:  there exists an $ F_4 $ and, therefore, existence follows for all even genera $ g \geq 4 $.

\indent Because the construction allows us to select any intersection on $ F_g $ at which to attach a $ Z $-piece, there are (usually) many different ways to produce a filling pair on the same surface $ S_g $; moreover, each time we attach a $ Z $-piece, we add four new intersections, which means we increase the number of choices of attachment point for each successive $ Z $-piece.  In light of this observation, it makes sense intuitively that the lower bound will be exponential in $ g $.  We obtain it (up to asymptotic equivalence) by taking the \textit{minimum} number of filling permutations that can be constructed by successive additions of $ Z $-pieces divided by the \textit{maximum} order of $ T $.

\indent It is a vital factor in counting the minimum number of possible filling permutations that $ Z $-pieces can always be removed \textit{cleanly}:  given an $ F_g $ with $ g>3 $, if it is possible to decompose $ F_g $ in more than one way into a connected sum$ - $say, $ F_{g-2}\ \sharp\ Z $ and $ F'_{g-2}\ \sharp\ Z' $, with $ Z \neq Z'- $then $ Z $ and $ Z' $ can overlap only on the boundary curves where each piece is attached to the larger surface:  the interiors of the two surfaces must be disjoint (note that the ``remainder" surfaces $ F_{g-2} $ and $ F'_{g-2} $ may be different).  By successively removing $ Z $-pieces, we could (in principle, at least) decompose an $ F_g $ uniquely into an irreducible ``kernel" surface with $ Z $-pieces attached at specified points.

\indent This hints at a more general phenomenon, and prompts us to ask a number of questions:

\begin{enumerate}
\item Is $ Z $ the only piece?  That is, does there exist an $ F_g $ that decomposes as $ F_{g-2}\ \sharp\ S_2 $, where the filling pairs on $ S_2 $ and $ Z $ are \textit{not} homeomorphic?
\item Are there higher-genus pieces?  Is it possible to perform constructions or obtain decompositions of surfaces of the form

\begin{equation}
F_{g-k}\ \sharp\ Z_k
\label{decompeqn}
\end{equation}
where $ Z_k $ has genus $ k $ with $ k>2 $?
\item Is there a criterion that will determine when a surface admits a decomposition like that in (\ref{decompeqn}), and that will indicate how to perform the decomposition?
\end{enumerate}

\indent Each of these questions is answered below in the affirmative.  Perhaps unsurprisingly, the decomposition criterion depends on filling permutations.  Specifically:

\begin{theorem}
Let $ F_g $ be a surface of genus $ g>2 $ together with a filling pair $ (\alpha, \beta) $ which intersect $ 2g-1 $ times, and let $ \sigma \in \Sigma_{8g-4} $ be a filling permutation for $ (\alpha, \beta) $.  Then $ F_g $ admits a decomposition of the form (\ref{decompeqn}) if and only if there exist ``appropriately separated" numbers $ x,a,y,b \in \lbrace 1,2, \ldots, 8g-4 \rbrace $ which satisfy a list of six permutation equations involving $ \sigma, Q^{4g-2} $, and $ \tau $.
\end{theorem}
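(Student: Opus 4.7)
The plan is to translate the topological connected-sum operation of Aougab-Huang into an algebraic splitting of the filling permutation $\sigma$ and to show that this translation is reversible. When a $Z_k$-piece with green vertex $v'$ is attached to $F_{g-k}$ at an intersection $v$, a disk neighborhood of $v$ is identified with a disk neighborhood of $v'$, causing the labels of the four arc-endpoints around $v$ to interleave in a controlled way with the four labels around $v'$. I will show that this interleaving leaves a recognizable algebraic signature in $\sigma$, and that conversely any such signature can be used to reconstruct the decomposition topologically.

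For the forward direction, assume $F_g \cong F_{g-k}\ \sharp\ Z_k$. Using the construction, locate $v$ and $v'$ in the combinatorial model of $(\alpha,\beta)$, and let $x,y$ be the two $\alpha$-labels and $a,b$ the two $\beta$-labels present at the attaching intersection, recorded in the cyclic order that gives rise to $\sigma$. I expect the six equations to fall naturally into three pairs. The first pair will express that the restrictions of $\sigma$ to the two label-subsets coming from $F_{g-k}$ and $Z_k$ each satisfy a filling-permutation relation of the form (\ref{permeqn}) on the appropriate index set of size $8(g-k)-4$ and $8k-4$ respectively. The second pair will record the cyclic position of $x,a,y,b$ at the attaching intersection --- this is precisely what ``appropriately separated'' will mean, and it pins down the separating disk used in the connected sum. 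The third pair will encode the green-vertex property on the $Z_k$-side, asserting that the four complementary regions meeting $v'$ are pairwise distinct. Each of these relations follows by direct transcription from the way arc-labels are identified across the connected sum, so producing $x,a,y,b$ from the assumed decomposition is a bookkeeping exercise using the explicit construction recalled in Section 1.

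For the reverse direction, given $x,a,y,b$ satisfying the six equations, I would read the first pair as defining two sub-permutations $\sigma_1,\sigma_2$ which, by Proposition 1.1 and the correspondence between filling permutations and homeomorphism classes of filling pairs, arise from genuine minimally-intersecting filling pairs on closed surfaces of genera $g-k$ and $k$. The second pair designates a canonical separating simple closed curve on $F_g$ obtained by banding together short arcs near the four labels $x,a,y,b$, and the third pair verifies that the distinguished intersection on the $Z_k$-side is indeed a green vertex so that the Aougab-Huang construction applies in reverse. Cutting $F_g$ along this separating curve and capping off by disks then recovers the desired decomposition $F_{g-k}\ \sharp\ Z_k$, completing the proof.

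The main technical obstacle will be formulating the six equations so that they simultaneously encode enough combinatorial information to force a genuine decomposition \emph{and} remain clean equations in $\Sigma_{8g-4}$, rather than membership conditions on subsets or inequalities on cyclic orderings. In particular, expressing the green-vertex condition as two permutation equations in $\sigma$, $Q^{4g-2}$, and $\tau$ --- rather than as a statement about the face structure of the cell complex dual to $\alpha\cup\beta$ --- is the subtle step, and will require choosing the indexing convention for $\sigma$ so that the adjacency of complementary regions to a given vertex becomes algebraically detectable via $\tau$ and $Q^{4g-2}$.
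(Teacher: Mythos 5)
There is a genuine gap, and it starts with a misreading of what the pieces of the decomposition are. In a decomposition $F_g=F_l\ \sharp\ Z_k$ the summand $Z_k$ is \emph{not} a minimally-intersecting filling pair: the genus count for the connected sum forces the pair on $Z_k$ to have $2k+2$ intersections and four complementary regions (with a vertex adjacent to all four), so its edge labels live in $\lbrace 1,\ldots,8k+8\rbrace$, not $8k-4$. Consequently your ``first pair'' of equations --- that restrictions of $\sigma$ to two label-subsets satisfy filling relations of the form (\ref{permeqn}) on index sets of sizes $8(g-k)-4$ and $8k-4$ --- cannot be the right content, and the actual six equations are of a different nature entirely: the numbers $x,a,y,b$ are the edges of the $(8g-4)$-gon at which the four arcs of the separating curve $\gamma$ are anchored, four equations of the form $Q^{4g-2}\sigma^{r-1}(x)=a$ (and cyclically) record that consecutive arcs of $\gamma$ cordon off polygons of sizes $r,s,t,u$ (the type of $Z_k$), and two equations $Q^{4g-2}\tau^{2k+1}(x)=y$, $Q^{4g-2}\tau^{2k+1}(a)=b$ record that $\alpha$ and $\beta$ each traverse exactly $2k+1$ arcs inside the $Z_k$-side --- this is what pins down the genus split, which your scheme never controls. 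Likewise there is no separate ``green-vertex pair'': the patched vertex is automatically adjacent to the four cordoned regions, and the ``appropriately separated'' condition is not a statement about cyclic position at the attaching intersection but a non-nesting condition on the anchors needed to make $\gamma$ embedded.

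The deeper gap is in your converse direction, which is where all the work lies. You pass from the algebraic data to the decomposition by asserting that the equations ``designate a canonical separating simple closed curve,'' but simplicity and, above all, separation are precisely what must be proved and do not follow from bookkeeping. In the paper's argument, simplicity comes from the separation condition on the anchors (nested, hence disjoint, arcs), and separation requires a genuine combinatorial argument: one forms the permutation $\rho'$ encoding the passage of $\gamma$ through the edges $x,a,y,b$ and their opposites, shows that $\tau_\alpha\phi_{\alpha\gamma}\rho'$ (and its $\beta$-analogue) traces the boundaries of the complementary regions of $\alpha\cup\gamma$, computes its cycles explicitly, and verifies via the anticommutation of $Q^{4g-2}$ and $\tau$ that the opposite of every edge not met by $\gamma$ lies on the same side of $\gamma$. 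Your appeal to Proposition 1.1 to recognize two sub-permutations as filling pairs on closed surfaces also cannot substitute for this, both because the $Z_k$-side does not satisfy a minimal filling relation and because one can only cut along $\gamma$ and cap off after $\gamma$ is known to be an embedded separating curve of the asserted type. Without an argument producing and certifying $\gamma$, the reverse implication is unproven.
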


\indent See Theorem 5.1 below for the full statement of the result.

\indent As we have said (see Fact 4), if $ (\alpha, \beta) $ is the filling pair on an $ F_g $, then $ d(\alpha, \beta)=3 $.  By extending $ F_g $ into an $ F_{g+2} $, we map this pair of vertices in $ \mathcal{C}(F_g) $ to a pair $ (\iota_g(\alpha), \iota_g(\beta)) $ in $ \mathcal{C}(F_{g+2}) $ with the same distance (since the new pair intersects minimally on the new surface).  But we achieved this extension by performing a surgery in a disk neighborhood of one point of $ \alpha\ \cap\ \beta $; since we can isotope any other scc in $ F_g $ away from this neighborhood, it follows that no other curve is affected by the extension.  Thus the map $ \iota_g: \mathcal{C}^1(F_g) \rightarrow \mathcal{C}^1(F_{g+2}) $ acts as inclusion on every other curve in $ F_g $; we therefore have an embedding of one curve graph into the other which does not increase distance:  all intersection numbers among curves in $ F_g $ other than $ \alpha $ and $ \beta $ are preserved (including intersections of either $ \alpha $ or $ \beta $ with any \textit{other} curve), and so any path in $ \mathcal{C}^1(F_g) $ connecting two such vertices is embedded in $ \mathcal{C}^1(F_{g+2}) $.  We have a similar embedding $ \iota_2: \mathcal{C}^1(Z) \rightarrow \mathcal{C}^1(F_{g+2}) $.

\indent A surface $ Y $ is an \textit{essential subsurface} of $ S_g $ if $ Y \subset S_g $ and every boundary component of $ Y $ is an essential curve in $ S_g $.  If $ Y $ is not an annulus, we have the following map on 0-skeleta, $ \pi_Y: \mathcal{C}^0(S_g) \rightarrow \mathcal{P}(\mathcal{AC}^0(Y)) $ (where $ \mathcal{P} $ denotes the power set):  for a vertex in $ \mathcal{C}(S_g) $, take a representative $ \gamma $ of its class that intersects $ Y $ minimally; then $ \pi_Y(\gamma)=\gamma \cap Y $.  The image of $ \gamma $ is a (possibly empty) union of disjoint curves and/or arcs$ - $the segments of $ \gamma $ where it cuts through the subsurface.  This map $ \pi_Y $ is the \textit{subsurface projection}.

\indent $ F_{g,1} $ (the surface with one of its vertices removed) can be thought of as an essential subsurface of $ F_{g+2} $; for every curve $ \gamma $ in $ F_g $ other than $ \alpha $ and $ \beta $, $ \iota_g(\gamma) \cap F_{g,1}=\iota_g(\gamma) $ (since $ \iota_g(\gamma) $ is completely contained in $ F_{g,1} $), and so the subsurface projection sends $ \iota_g(\gamma) $ to $ \gamma $:  $ \pi_{F_{g,1}} \circ \iota_g $ is the identity on all vertices of $ \mathcal{C}(F_g) $ except $ \alpha $ and $ \beta $ (and similarly for $ \pi_Y $ and $ \iota_2 $, where $ Y $ is $ Z $ with the green vertex removed, viewed as a subsurface of $ F_{g+2} $).  The maps $ \iota_g $ and $ \iota_2 $ are (almost) one-sided inverses of the subsurface projection.  In diagram form:

\begin{Large}
\begin{displaymath}
\xymatrix{ &\mathcal{C}^1(F_{g+2}) \ar@/_/[dl]_{\pi_{F_{g,1}}} \ar@/^/[dr]^{\pi_Y} &\\
\mathcal{C}^1(F_g) \ar@/_/@{^{(}->}[ur]_{\iota_g} &  &\mathcal{C}^1(Z) \ar@/^/@{_{(}->}[ul]^{\iota_2}}
\end{displaymath}
\end{Large}

\indent Theorem 1.2 generalizes this picture:

\begin{Large}
\begin{displaymath}
\xymatrix{ &\mathcal{C}^1(F_{l+k}) \ar@/_/[dl]_{\pi_{F_{l,1}}} \ar@/^/[dr]^{\pi_{Z_{k,1}}} &\\
\mathcal{C}^1(F_l) \ar@/_/@{^{(}->}[ur]_{\iota_l} &  &\mathcal{C}^1(Z_k) \ar@/^/@{_{(}->}[ul]^{\iota_k}}
\end{displaymath}
\end{Large}

The maps $ \iota_l: \mathcal{C}^1(F_l) \rightarrow \mathcal{C}^1(F_{l+k}) $ and $ \iota_k: \mathcal{C}^1(Z_k) \rightarrow \mathcal{C}^1(F_{l+k}) $ are almost one-sided inverses to the appropriate subsurface projections.

\section{Filling Pairs and Filling Permutations}

Let $ (\alpha, \beta) $ be a filling pair on $ S_g $.  As we've said, $ S_g \smallsetminus (\alpha \cup \beta) $ is a union of polygons; if $ i(\alpha, \beta)=n $, then the sum of the edges of these polygons is $ 4n $ (each curve has $ n $ arcs and, when we cut open the surface, we produce two copies of each arc).  Let $ \Sigma_{4n} $ be the symmetric group on $ \lbrace 1,2, \ldots, 4n \rbrace $.  We will define a filling permutation $ \sigma \in \Sigma_{4n} $ for $ (\alpha, \beta) $, which will record the information about how the two curves intersect each other, and is sufficient to completely determine the surface and its filling pair.  This definition and the associated results are generalizations of those introduced in \cite{AH}.  We define $ \sigma $ as follows:

\indent Orient $ \alpha $ and $ \beta $ and choose an initial arc of each.  Label these arcs $ \alpha_1 $ and $ \beta_1 $ (respectively), and then label the other arcs of each curve in the order of orientation.  The union of polygons will then have edges with labels in $ \lbrace \alpha_i, \beta_i, \overline{\alpha_i}, \overline{\beta_i}:  1 \leq i \leq n \rbrace $.  We will assign to these labels elements of $ \lbrace 1,2, \ldots, 4n \rbrace $ by the convention:

$ \alpha_i \mapsto 2i-1 $

$ \beta_i \mapsto 2i $

$ \overline{\alpha_i} \mapsto 2i-1+2n $

$ \overline{\beta_i} \mapsto 2i+2n $.

We will often equivocate between ``an edge," ``the label of an edge," and ``the number assigned to the label of an edge."

\indent For each region of $ S_g \smallsetminus (\alpha \cup \beta) $, construct a cycle whose entries are the numbers of the edge labels, in clockwise order (note that, as a permutation, the cycle is independent of the choice of starting edge).  Let $ \sigma $ be the product of these (commuting) cycles.  Then $ \sigma $ is the \textit{filling permutation} of the (oriented) pair $ (\alpha, \beta) $.

\begin{example}
Consider the surface $ Z $ introduced above (see Figure \ref{Zfig}).  $ Z $ has a filling pair $ (\alpha, \beta) $ with intersection number 6,  $ Z \smallsetminus (\alpha \cup \beta) $ has four regions (two octagons and two rectangles), and the green vertex (as well as one other vertex) is adjacent to all four regions.  Orient the curves as shown in Figure \ref{Zorientfig} and choose initial arcs to be the ones whose origin is the green vertex.  Then $ \sigma=(1,10,15,20,17,22,3,12)(24,5,18,11)(23,16,9,6,7,4,21,14) $

$ (2,19,8,13) $.  \indent \indent \indent \indent \indent \indent \indent \indent \indent \indent \indent \indent \indent \indent \indent \indent \ \ \ \ $ \square $

\begin{figure}[ht]
\begin{center}
\includegraphics[scale=.2]{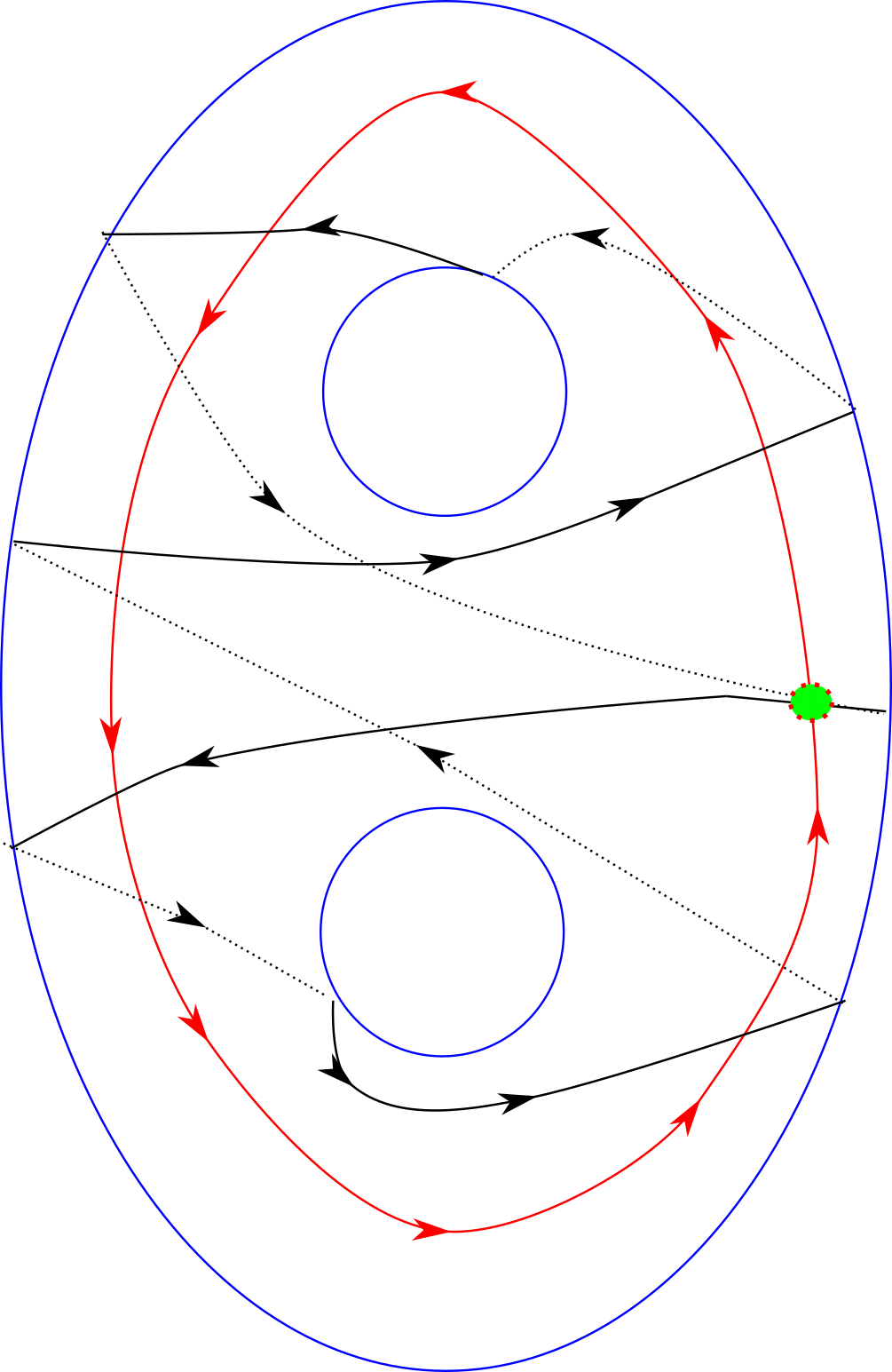}
\caption{$ Z $ with the curves $ \alpha $ and $ \beta $ now oriented.\label{Zorientfig}}
\end{center}
\end{figure}
\end{example}

\indent In \cite{AH}, the definition is given for minimally-intersecting filling pairs only; as we will see (Proposition 3.1), the filling pair $ (\alpha, \beta) $ intersects minimally if and only if $ S_g \smallsetminus (\alpha \cup \beta) $ consists of only one disk-region.  Thus, filling permutations in \cite{AH} are always a single cycle, while our definition allows them to have more than one cyclic factor.  Apart from some slight streamlining, the proofs of the results below mainly differ from the analogous results in \cite{AH} in that they allow for $ S_g \smallsetminus (\alpha \cup \beta) $ to have more than one component, and for $ \sigma $ to consist of more than one cycle.

\begin{lemma}
Let $ (\alpha, \beta) $ be a filling pair on a surface $ S_g $ with $ i(\alpha, \beta)=n $, and let $ \sigma \in \Sigma_{4n} $ be a filling permutation for $ (\alpha, \beta) $.  Then $ \sigma $ determines $ g- $specifically:
\begin{equation}
g=1+\frac{1}{2} (n-c),
\label{genuseq}
\end{equation}
where $ c $ is the number of cyclic factors of $ \sigma $.
\end{lemma}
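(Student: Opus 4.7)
The plan is to prove this by computing the Euler characteristic of $S_g$ from the CW structure induced by the filling pair, and comparing it to $\chi(S_g) = 2 - 2g$.

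First, I would invoke Fact 3 (which will be justified formally in the upcoming Proposition 3.1, but is stated in the introduction) to conclude that since $(\alpha, \beta)$ is a filling pair, each component of $S_g \smallsetminus (\alpha \cup \beta)$ is an open disk. This endows $S_g$ with a natural CW structure: the $0$-cells are the intersection points of $\alpha$ and $\beta$, the $1$-cells are the arcs into which $\alpha$ and $\beta$ cut each other, and the $2$-cells are the complementary polygonal regions.

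Next I would count each type of cell. The number of $0$-cells is exactly $i(\alpha, \beta) = n$, by definition of geometric intersection number for a filling pair in minimal position. Each curve is partitioned by these $n$ intersection points into $n$ arcs, so there are $2n$ edges. For the $2$-cells, I would appeal directly to the definition of $\sigma$: the filling permutation is constructed by forming one cyclic factor for each complementary region (reading its boundary edges in clockwise order), and these cycles are disjoint because every edge label appears on the boundary of exactly one region on each of its two sides, contributing to exactly one cycle via that side. Hence the number of faces equals $c$, the number of cyclic factors of $\sigma$.

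Finally, I would assemble these counts into the Euler characteristic computation
\begin{equation*}
\chi(S_g) = V - E + F = n - 2n + c = c - n,
\end{equation*}
and equate with $\chi(S_g) = 2 - 2g$, solving to obtain $g = 1 + \tfrac{1}{2}(n - c)$, which is (\ref{genuseq}).

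The only genuine subtlety, and the point I would take most care with, is the face-counting step: one must be sure that distinct complementary regions yield distinct cyclic factors and that no region is counted twice. This follows from the bookkeeping in the definition, where each of the $4n$ directed edge-sides $\{\alpha_i, \beta_i, \overline{\alpha_i}, \overline{\beta_i}\}$ appears in exactly one boundary cycle, so the cycles partition $\{1, \ldots, 4n\}$ and are in bijection with the faces. Everything else is a routine Euler characteristic calculation.
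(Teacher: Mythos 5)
Your proposal is correct and follows essentially the same route as the paper: both use the cell decomposition of $S_g$ induced by the filling pair, count $V=n$, $E=2n$, $F=c$ (faces in bijection with the cyclic factors of $\sigma$), and solve $\chi(S_g)=c-n=2-2g$ for $g$. The only cosmetic difference is that the paper deduces $E=2n$ from $4$-valence of the graph while you count arcs directly; both are equivalent.
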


\begin{proof}
$ \alpha \cup \beta $ is an embedded graph in $ S_g $:  its vertices are the elements of $ \alpha\ \cap\ \beta $, its edges are the components of $ \alpha \bigtriangleup \beta $, and its regions are the components of $ S_g \smallsetminus (\alpha \cup \beta) $.  Since $ (\alpha, \beta) $ is a filling pair, all of these regions are disks, and therefore this graph gives a cell decomposition for $ S_g $.  Thus, we can compute its Euler characteristic by $ \chi(S_g)=V-E+F $.

$ V=i(\alpha, \beta)=n $, $ F=c $, and since the graph is 4-valent, it follows that $ E=2V $.  So:

$ \chi(S_g)=n-2n+c $

$ \chi(S_g)=-n+c $.  Since $ S_g $ is a closed surface, we have

$ -n+c=\chi(S_g)=2-2g $.  Solving for $ g $, we get (\ref{genuseq}).
\end{proof}

\indent Since the surface on which $ (\alpha, \beta) $ is a filling pair is determined by its filling permutation, this suggests that both the surface and the pair can be defined by the filling permutation.  All we require is a way to characterize filling permutations in purely algebraic terms.  Let $ Q, \tau \in \Sigma_{4n} $ be the permutations $ Q=(1,2, \ldots, 4n) $ and $ \tau=(1,3, \ldots, 2n-1)(2,4, \ldots, 2n)(4n-1,4n-3, \ldots, 2n+1)(4n,4n-2, \ldots, 2n+2) $.  Let $ N=2n $.  Then we have the following result.

\begin{theorem}
Let $ \sigma \in \Sigma_{4n} $ be a filling permutation.  Then $ \sigma $ is a product of cycles which alternate odd and even entries, and it satisfies the permutation equation
\begin{equation}
\sigma Q^N \sigma=\tau.
\label{Npermeq}
\end{equation}
\indent Conversely, if $ \sigma $ is a permutation whose cyclic factors alternate odd and even entries and satisfies (\ref{Npermeq}), then $ \sigma $ is the filling permutation of a filling pair $ (\alpha, \beta) $ on some surface $ S_g $.
\end{theorem}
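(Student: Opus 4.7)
The plan is to prove the forward and reverse directions separately. The forward direction is a geometric argument that translates properties of the 4-valent cell structure of $ \alpha \cup \beta $ into statements about $ \sigma $, while the reverse direction constructs a surface with a filling pair by polygon gluing.

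For the forward direction, the alternation of odd and even entries in each cycle of $ \sigma $ is immediate: in the cell decomposition of $ S_g $ by $ \alpha \cup \beta $, every complementary region is a polygon whose boundary alternates between arcs of $ \alpha $ and arcs of $ \beta $, since at each 4-valent vertex the two curves cross transversely.  Under the labeling convention, labels of $ \alpha $-sides ($ \alpha_i $ and $ \overline{\alpha_i} $) are odd and labels of $ \beta $-sides are even, so each clockwise boundary cycle of $ \sigma $ strictly alternates parities and has even length.

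The real work is the equation $ \sigma Q^N \sigma = \tau $.  I would interpret the setup as an oriented combinatorial map (ribbon graph):  each label is a \emph{dart} (an arc together with a choice of side), $ \sigma $ is the face rotation (one cycle per complementary polygon), and $ Q^N $ is the edge involution swapping the two sides of each arc.  In this framework, $ \sigma Q^N \sigma $ has an unambiguous local meaning:  from a dart $ d $, step CW along the face of $ d $ to the next dart $ \sigma(d) $ (which lives on a $ \beta $-arc, by alternation), jump to the opposite side $ Q^N(\sigma(d)) $ of that arc, and then step CW along the new face to the next dart.  A local analysis at the intersection vertex $ t_i $ (where $ \alpha_i $ ends and $ \alpha_{i+1} $ begins along the oriented curve $ \alpha $) shows that the composite $ \sigma Q^N \sigma $ is precisely a single step of ``vertex rotation'' followed by ``continuation along the curve''; its image on $ \alpha_i $ is the next $ \alpha $-dart along $ \alpha $, namely $ \alpha_{i+1} $.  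The barred case $ \sigma Q^N \sigma(\overline{\alpha_i}) = \overline{\alpha_{i-1}} $ follows from the same analysis but with the induced boundary orientation reversed (since the two sides of an arc inherit opposite CW conventions), so the composite steps backwards along $ \alpha $; the $ \beta $-labels are handled identically.  Together these recover the four cycles of $ \tau $.

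For the converse, given any $ \sigma \in \Sigma_{4n} $ whose cyclic factors alternate odd and even entries and which satisfies (\ref{Npermeq}), I would construct a polygon for each cyclic factor of $ \sigma $ with edges labeled in CW order by the entries of the cycle, and then glue each polygon edge labeled $ a $ to the unique edge labeled $ Q^N(a) $ in an orientation-reversing fashion.  The alternation property forces the polygons to have even length and guarantees that $ \alpha $-edges are glued to $ \alpha $-edges (and likewise for $ \beta $), so the glued complex carries a pair of 1-submanifolds.  Verifying that the quotient is a closed orientable surface amounts to checking that the link of each identified vertex is a single circle; this follows from $ \sigma Q^N \sigma = \tau $, which is exactly the algebraic condition forcing the corner-arcs at each vertex to concatenate into one closed loop.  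That the reconstructed $ \alpha $ and $ \beta $ are each a single simple closed curve (rather than a multicurve) is immediate from $ \tau $ having one cycle on $ \{1, 3, \ldots, 2n-1\} $ and one on $ \{2, 4, \ldots, 2n\} $ (together with their barred counterparts), so the arcs string together into two curves of $ n $ arcs each.  Finally $ (\alpha, \beta) $ is automatically filling, since the complementary regions are polygons by construction.

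The main obstacle will be the detailed verification of $ \sigma Q^N \sigma = \tau $ in the forward direction:  it requires a careful combinatorial-map dictionary (which polygon-edge label corresponds to which dart, which face lies on which side of an oriented arc, and how clockwise conventions around faces and around vertices interact on the oriented surface), so that the triple composition can be unambiguously identified with the ``next arc along the curve'' operation.  Once this dictionary is laid out cleanly, the equation reduces to a routine local check at a single intersection vertex, and the analogous check for $ \beta $ is then automatic by symmetry.
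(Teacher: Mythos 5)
Your proposal follows essentially the same route as the paper: the forward direction is the local corner/dart analysis at a transverse crossing (the paper phrases it via the four corners of the cut-open crossing; your ribbon-graph ``face rotation $\sigma$, edge involution $Q^N$'' dictionary is the same picture in different language), and the converse is the same polygon-gluing construction from the cyclic factors of $\sigma$.

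One part of your converse is mis-aimed, though, and it's worth fixing. You assert that the main thing to check is ``the link of each identified vertex is a single circle,'' and that this follows from $\sigma Q^N \sigma = \tau$. But when you glue polygons edge-to-edge in pairs, the link of every vertex is \emph{automatically} a circle---that condition is vacuous. The substantive check is about the \emph{size} of each vertex class: you need the polygon vertices to be identified in fours, so that each vertex of the glued complex is a genuine transverse crossing of $\alpha$ and $\beta$ (otherwise the reconstructed curves need not be simple, and $i(\alpha,\beta)$ need not be $n$). The paper derives this by showing $(Q^N\sigma)^4 = 1$: the identity $Q^N\sigma\,Q^N\sigma\,Q^N\sigma\,Q^N\sigma = Q^N(\sigma Q^N\sigma)Q^N(\sigma Q^N\sigma) = Q^N\tau Q^N\tau$, together with the anticommutation of $Q^N$ and $\tau$, gives the result. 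Relatedly, your claim that $\alpha$ and $\beta$ being single closed curves is ``immediate from $\tau$ having one cycle'' on each parity class needs the equation, not just $\tau$'s cycle structure: the point is that $\sigma Q^N\sigma$ is the combinatorial operation of stepping to the next arc along the same curve after gluing, and the equation identifies this operation with $\tau$, whose transitivity on each parity/orientation class then forces each curve to be a single loop. With those two points made explicit, your argument matches the paper's.
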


\indent Compare Lemma 2.2 in \cite{AH}.

\begin{proof}
As we observed in Fact 3, the polygonal components of $ S_g \smallsetminus (\alpha \cup \beta) $ have alternating $ \alpha $-and $ \beta $-edges; since $ \alpha $-edges have odd numbers and $ \beta $-edges have even ones, the cyclic factors of $ \sigma $ must alternate odd and even entries.

\indent To see that $ \sigma $ satisfies (\ref{Npermeq}), consider a crossing of $ \alpha $ and $ \beta $ with arcs $ \alpha_i, \alpha_{i+1}, \beta_j, \beta_{j+1} $ (where $ n+1=1 $).  Cutting open the surface along $ \alpha \cup \beta $ splits the crossing into four corners (see Figure \ref{cornerfig}).

\begin{figure}[htb]
\begin{center}
\includegraphics[scale=.3]{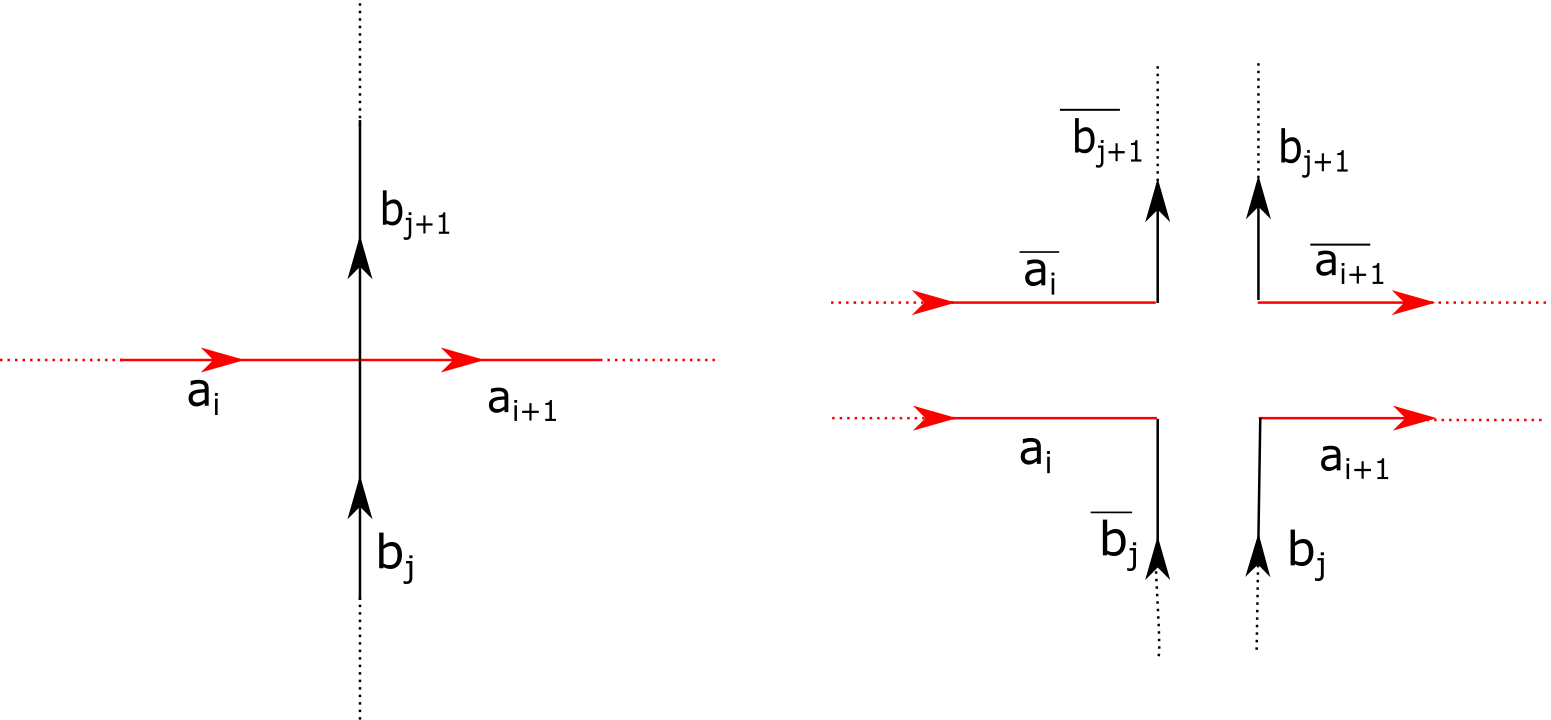}
\caption{A crossing of $ \alpha \cup \beta $ split into four corners on the union of polygons.\label{cornerfig}}
\end{center}
\end{figure}

\indent The permutations $ \sigma, Q^N $, and $ \tau $ act on the set of edges of the union of polygons:  $ \sigma $ sends each edge to the edge next to it in the clockwise direction, $ Q^N $ sends each edge to its opposite, and $ \tau $ cycles the edges of $ \alpha $ and $ \beta $ in the order of their labeling (cycling positive edges in the forward order and negative edges in the reverse order).  Applying the left side of (\ref{Npermeq}) to $ \alpha_i $, we see that $ \sigma $ sends it to $ \overline{\beta_j} $, $ Q^N $ sends $ \overline{\beta_j} $ to $ \beta_j $, and $ \sigma $ sends $ \beta_j $ to $ \alpha_{i+1}=\tau(\alpha_i) $, in agreement with the right side of (\ref{Npermeq}).  Similarly, $ \sigma Q^N \sigma(\beta_j)=\tau(\beta_j) $.  Applying the left side to $ \overline{\alpha_{i+1}} $ sends it to $ \beta_{j+1} $, which then goes to $ \overline{\beta_{j+1}} $, which then goes to $ \overline{\alpha_i}=\tau(\overline{\alpha_{i+1}}) $ (and similarly for $ \overline{\beta_{j+1}}) $.  Since $ i $ and $ j $ are arbitrary, (\ref{Npermeq}) holds.

\indent Conversely, suppose $ \sigma $ is as described in the statement of the theorem.  We will construct a surface $ S_g $ with a filling pair $ (\alpha, \beta) $ for which $ \sigma $ can be a filling permutation.  For each cyclic factor of $ \sigma $, take a polygon with as many sides as the cycle has entries.  Assign to the edges of the polygon the labels in $ \lbrace \alpha_i, \beta_i, \overline{\alpha_i}, \overline{\beta_i}:  1 \leq i \leq n \rbrace $ that correspond to the entries of the cycle, going around the polygon in the clockwise direction.  We show that, when these polygons are glued together by identifying edges with opposite labels, we obtain a closed, orientable surface, and the edges form a filling pair of scc's.

\indent $ \sigma, Q^N $, and $ \tau $ act on the edges of the polygons as stated above.  Take the corner with $ \alpha_i $ on the left side (that is, counterclockwise to the vertex.  In Figure \ref{cornerfig}, this is the lower left corner).  The left side of (\ref{Npermeq}) sends $ \alpha_i $ to the edge that will follow it once opposite $ \beta $-edges are identified; according to (\ref{Npermeq}), this edge must be $ \tau(\alpha_i)=\alpha_{i+1} $.  Since $ i $ is arbitrary, each $ \alpha $-edge is followed by the one with the next label (with $ \alpha_n $ followed by $ \alpha_1 $); since $ \tau $ cycles all the positive $ \alpha $-edges, the resulting curve must contain all of them:  $ \alpha $ is a single, closed curve.  Similarly, all the negative $ \alpha $-edges are looped in the order of their labeling, and all the $ \beta $-edges must form a single curve when opposite $ \alpha $-edges are identified.

\indent We now show that the $ 4n $ vertices of the union of polygons are identified in fours, so that $ i(\alpha, \beta)=n $.  The permutation $ Q^N \sigma $ sends the left edge of a vertex to the left edge of a vertex identified with it, so it suffices to show that $ (Q^N \sigma)^4=1 $.

$ Q^N \sigma Q^N \sigma Q^N \sigma Q^N \sigma=Q^N  (\sigma Q^N \sigma) Q^N (\sigma Q^N \sigma)=Q^N \tau Q^N \tau $.

\indent This product acts on a positive edge by sending it forward by 1 ($ \tau $), sending the next edge to its opposite ($ Q^N $), sending the opposite edge \textit{backward} by 1 (since $ \tau $ sends positive and negative edges in opposite directions), and then sending this edge to its opposite$ - $which is the edge with which we started.  Negative edges follow a similar circuit.  Thus $ (Q^N \tau)^2 $ fixes every edge, and is therefore the identity permutation.  Notice that we have just proved $ \tau $ and $ Q^N $ anticommute.  This will be a useful fact in what follows.

\indent Gluing together opposite edges of the polygons produces a surface.  Since all the boundary edges have been identified in pairs, the surface is closed; since we have identified opposite-oriented edges, the surface is orientable.  $ (\alpha, \beta) $ is obviously a filling pair, since their complement is precisely the union of polygons with which we started.  By Lemma 2.1, then, the surface produced by the gluing is $ S_g $, where $ g $ is given by (\ref{genuseq}).
\end{proof}

\indent The construction of the filling permutation from the pair $ (\alpha, \beta) $ depended on the introduction of additional topological and combinatorial structure$ - $ namely, the orientation of the two curves and the labeling of their arcs.  If we made different choices for this information, we would obtain a different filling permutation.  We will refer to $ (\alpha, \beta) $ together with orientations and labelings as an \textit{oriented filling pair}, and the pair without this extra information as an \textit{ordinary filling pair}.  We would like to know the relationship between two filling permutations whose oriented filling pairs have the same underlying ordinary filling pair.  Since we will be interested in identifying pairs up to the action of Mod($ S_g $)), we would also like to know how homeomorphisms affect filling permutations.

\begin{proposition}
Let $ \kappa, \delta, \eta, \mu \in \Sigma_{4n} $ be the permutations:

$ \kappa=(1,3, \ldots, 2n-1)(2n+1,2n+3, \ldots, 4n-1) $

$ \delta=(2,4, \ldots, 2n)(2n+2,2n+4, \ldots, 4n) $

$ \eta=(1,2n+1)(3,2n+3) \cdots (2n-1,4n-1) $

$ \mu=(1,2)(3,4) \cdots (4n-1,4n) $.  Let $ T=\langle \kappa, \delta, \eta, \mu \rangle \leq \Sigma_{4n} $.  If $ (\alpha, \beta) $ and $ (\alpha', \beta') $ are in the same Mod($ S_g $)-orbit, then their filling permutations are conjugate by an element of $ T $.
\end{proposition}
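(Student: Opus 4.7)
The plan is to factor the proof into two steps: (i) show that changing the auxiliary data used in constructing $ \sigma $ from $ (\alpha,\beta) $ (the orientations of the two curves, the choice of initial arc for each, and the ordering of the pair) amounts to conjugating $ \sigma $ by an explicit element of $ T $; and (ii) show that a homeomorphism $ \phi $ carrying $ (\alpha,\beta) $ to $ (\alpha',\beta') $ induces, via pullback, a distinguished choice of auxiliary data for $ (\alpha',\beta') $ whose filling permutation equals $ \sigma $ on the nose.  Any other choice of auxiliary data for $ (\alpha',\beta') $ then differs from this one by a data change, so by (i) the resulting filling permutation $ \sigma' $ equals $ t\sigma t^{-1} $ for some $ t\in T $.

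For step (i), the basic observation is that relabeling the edges of the polygonal decomposition by a bijection $ g\in\Sigma_{4n} $ (so that the edge formerly carrying label $ k $ now carries label $ g(k) $) transforms the filling permutation into $ g\sigma g^{-1} $, since $ \sigma(k) $ is defined as the clockwise successor of the edge labeled $ k $.  It then remains to identify each elementary change of auxiliary data with the action of one of the four generators on the labeling convention $ \alpha_i\mapsto 2i-1 $, $ \beta_i\mapsto 2i $, $ \overline{\alpha_i}\mapsto 2i-1+2n $, $ \overline{\beta_i}\mapsto 2i+2n $: shifting the initial arc of $ \alpha $ by one is implemented by $ \kappa^{\pm 1} $; similarly for $ \beta $ by $ \delta $; swapping the positive/negative labeling convention on $ \alpha $ is implemented by $ \eta $ (which, combined with suitable powers of $ \kappa $, realizes a reversal of $ \alpha $'s orientation); and swapping the roles of the two curves is implemented by $ \mu $ (so that reversal of $ \beta $'s orientation is realized by $ \mu\eta\mu^{-1} $ together with a $ \delta $-shift).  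A direct computation on the labeling formula verifies each identification.

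For step (ii), since $ \phi $ is an orientation-preserving homeomorphism carrying the cell decomposition $ \alpha\cup\beta $ to $ \alpha'\cup\beta' $, it preserves the clockwise cyclic order of edges around each polygon.  Equipping $ \alpha' $ and $ \beta' $ with the orientations and initial arcs pulled back through $ \phi $ (after using $ \mu $ as needed if $ \phi $ exchanges the two curves), every edge of the new decomposition inherits the label its $ \phi $-preimage carried in the old one, so the filling permutation assembled from this pullback data equals $ \sigma $ verbatim.  Composing with step (i) then handles any other choice of auxiliary data for $ (\alpha',\beta') $.

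The main obstacle is the bookkeeping in step (i): verifying by direct inspection that each elementary data change permutes $ \{1,\dots,4n\} $ exactly by one of $ \kappa,\delta,\eta,\mu $ or a short product of them, and confirming that these four generators together realize every valid change of orientations, initial arcs, and curve ordering (including the less obvious reversal of $ \beta $'s orientation).  A subtlety to watch is that any conjugating element must preserve the structural constraints on filling permutations in Theorem 2.2 (the parity-alternation of the cyclic factors and equation (\ref{Npermeq})); these are both manifest for the four generators, so they persist throughout $ T $.
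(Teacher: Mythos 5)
Your proposal follows essentially the same route as the paper's proof: the paper likewise splits the argument into (a) a homeomorphism matching the oriented, labelled data leaves the filling permutation literally unchanged (argued there via the induced simplicial map sending each complementary polygon to a same-sized polygon by an orientation-preserving rotation), and (b) any change of orientations, initial arcs, or ordering of the two curves is a relabelling realized by conjugation by $ \kappa, \delta, \eta, \mu $ and their products. Your only departures are cosmetic---performing the two steps in the opposite order and making explicit the general principle that relabelling the edges by $ g $ conjugates $ \sigma $ by $ g $.
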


\indent Compare Lemma 2.3 in \cite{AH}.

\begin{proof}
If we restrict our attention to homeomorphic \textit{oriented} filling pairs, then there is no difference in the filling permutations.  let $ \sigma, \sigma' $ be the filling permutations for the two pairs (respectively).  The action of a homeomorphism on $ S_g $ which maps $ (\alpha, \beta) $ to $ (\alpha', \beta') $ gives an action mapping $ S_g \smallsetminus (\alpha \cup \beta) $ to $ S_g \smallsetminus (\alpha' \cup \beta') $.  This action must be simplicial (mapping vertices to vertices and edges to edges), and it must map polygons with the same number of sides to one another.

\indent Suppose there is a unique $ k $-gon among the components of $ S_g \smallsetminus (\alpha \cup \beta) $.  Then the homeomorphism maps it simplicially to itself$ - $i.e., the restriction of the homeomorphism to this $ k $-gon is an element of the dihedral group $ D_{2k} $; since it preserves orientation, it must be a rotation.  Rotating the $ k $-gon has the effect of cycling the entries of the factor of $ \sigma $ corresponding to the $ k $-gon$ - $but this does not change the underlying permutation.  Now suppose there are multiple $ k $-gons.  Each one is mapped simplicially onto another.  But this is also a rigid motion of the $ k $-gon (an element of $ D_{2k} $) and, by the same argument, it must be a rotation.  Thus, under the homeomorphism, each $ k $-gon in $ S_g \smallsetminus (\alpha \cup \beta) $ is replaced with a rotation of another $ k $-gon (possibly itself).  This has the effect of permuting the $ k $-cycle factors of $ \sigma $ and cycling their entries.  We have already observed that cycling the entries doesn't change the permutation and, since all the cyclic factors of a filling permutation commute, $ \sigma=\sigma' $.

\indent It remains to show how to obtain one filling permutation from another when the oriented filling pairs of each have the same ordinary filling pair.  This is equivalent to the question of how to obtain one oriented filling pair from another, and is simply a matter of relabeling:  we can interchange the roles of $ \alpha $ and $ \beta $ (conjugation by $ \mu $), change the orientation of $ \alpha $ (conjugation by $ \eta $) or $ \beta $ (a combination of the previous two), cycle the labels of $ \alpha $ (conjugation by a power of $ \kappa $), or cycle the labels of $ \beta $ (conjugation by a power of $ \delta $).  The elements of $ T $ (called \textit{twisting permutations}) give every possible relabeling action of the oriented filling pairs, thus completing the proof.
\end{proof}

\section{Minimally-Intersecting Filling Pairs}

Having introduced filling permutations as a combinatorial device for representing filling pairs, we will now restrict our attention to \textit{minimally-intersecting} filling pairs.  Our first task will be to determine this minimum.

\begin{proposition}
Let $ (\alpha, \beta) $ be a filling pair on $ S_g $.  Then $ i(\alpha, \beta) \geq 2g-1 $.
\end{proposition}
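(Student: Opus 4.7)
The plan is to deduce this bound as an essentially immediate corollary of Lemma 2.1. That lemma already packages the Euler characteristic computation for a filling pair into the identity $g = 1 + \tfrac{1}{2}(n-c)$, where $n = i(\alpha,\beta)$ and $c$ is the number of complementary regions of $\alpha \cup \beta$ in $S_g$ (equivalently, the number of cyclic factors of a filling permutation $\sigma$). Rearranging gives
\begin{equation*}
n \;=\; 2g - 2 + c,
\end{equation*}
so the desired inequality $n \geq 2g - 1$ is equivalent to the statement that $c \geq 1$.

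Hence the entire proof reduces to verifying $c \geq 1$, i.e., that the complement $S_g \setminus (\alpha \cup \beta)$ is nonempty. This is immediate: $\alpha \cup \beta$ is a finite 1-dimensional CW-subcomplex of the closed 2-manifold $S_g$, so it cannot cover $S_g$ and its complement has at least one component. (By Fact 3, that component is a disk, but all we need here is nonemptiness.) Putting this together with Lemma 2.1 yields $n \geq 2g - 1$.

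There is no real obstacle in this argument, since the substantive counting work has already been done in the proof of Lemma 2.1; the present proposition is its first application. I would write the proof in three lines: cite Lemma 2.1 to get $n = 2g - 2 + c$, note that $c \geq 1$ because $\alpha \cup \beta$ is a proper subset of $S_g$, and conclude. It is also worth remarking, as a sanity check consistent with Fact 4, that equality $n = 2g-1$ holds exactly when $c = 1$, i.e., when the complement of $\alpha \cup \beta$ is a single polygon; this foreshadows the structural role that single-cycle filling permutations will play for minimally-intersecting pairs in the rest of Section 3.
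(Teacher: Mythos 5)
Your proposal is correct and follows essentially the same route as the paper: the paper likewise invokes the Euler characteristic computation from the proof of Lemma 2.1 to get $i(\alpha,\beta)=2g-2+F$ and concludes from $F\geq 1$. Your only addition is the explicit (and harmless) remark justifying why the number of complementary regions is at least one, which the paper leaves implicit.
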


Compare Lemma 2.1 in \cite{AH}.

\begin{proof}
By the proof of Lemma 2.1, we have:

$ \chi(S_g)=-i(\alpha, \beta)+F $

$ 2-2g=-i(\alpha, \beta)+F $

$ i(\alpha, \beta)=2g-2+F \geq 2g-1 $.
\end{proof}

\indent Note that $ \alpha $ and $ \beta $ intersect minimally if and only if $ S_g \smallsetminus (\alpha \cup \beta) $ is a single disk (an $ (8g-4) $-gon).  By Fact 3, this implies that $ d(\alpha, \beta)=3 $ in the curve graph (this was the content of Fact 4).

\indent As we said in Section 1, we use the notation $ F_g $ to refer to the surface $ S_g $ together with a minimally-intersecting filling pair (up to homeomorphism).  We now show that the construction we described there realizes the minimum intersection number for nearly all genera.  Most of the work is done by the following lemma.

\begin{lemma}
Let $ Z $ and $ F_g $ be as described, and let $ F_g\ \sharp\ Z $ be the connected sum obtained by deleting a disk neighborhood of the green vertex from $ Z $, deleting a disk neighborhood of any vertex of $ F_g $, and identifying the boundaries of both the deleted disks and the pairs of arcs produced by deleting the disks.  Then $ F_g\ \sharp\ Z $ is an $ F_{g+2} $.
\end{lemma}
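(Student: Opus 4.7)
The plan is to verify three things about the construction: that the underlying closed surface is $ S_{g+2} $, that $ \alpha $ and $ \beta $ descend to a pair of simple closed curves which fill this surface, and that their geometric intersection number equals $ 2(g+2)-1 = 2g+3 $. Since $ 2(g+2)-1 $ is the minimum possible intersection number for a filling pair on $ S_{g+2} $ by Proposition 3.1, these three facts together say precisely that $ F_g\ \sharp\ Z $ is an $ F_{g+2} $.

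The surface identification is immediate because the construction is a standard topological connected sum of $ S_g $ with $ S_2 $. For the intersection count I would argue directly from the construction: removing a disk around the chosen vertex $ v $ of $ F_g $ deletes the crossing at $ v $ and turns $ \alpha $ and $ \beta $ into arcs with two endpoints each on the new boundary circle, leaving $ (2g-1)-1 = 2g-2 $ interior crossings; the identical analysis on $ Z $ yields $ 6-1 = 5 $ interior crossings and four boundary endpoints. Because the deleted vertices are transverse crossings, the four arc endpoints on each boundary circle appear in the cyclic order $ \alpha,\beta,\alpha,\beta $, and the identification specified in the statement pairs $ \alpha $-endpoints with $ \alpha $-endpoints and $ \beta $-endpoints with $ \beta $-endpoints. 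This concatenates the $ \alpha $-arcs of the two pieces into a single simple closed curve on $ S_{g+2} $ (and likewise for $ \beta $), and the identified boundary points are interior points of these curves rather than crossings, so $ i(\alpha,\beta) = 2g+3 $ on the new surface.

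The substantive step is showing that $ S_{g+2} \smallsetminus (\alpha \cup \beta) $ is a single disk, since by Fact 3 this forces $ (\alpha,\beta) $ to be a filling pair, and combined with the intersection count this makes $ F_g\ \sharp\ Z $ an $ F_{g+2} $. Because $ F_g $ has minimal intersection, its complement is a single $ (8g-4) $-gon, and the chosen vertex $ v $ appears as four of its corners; excising a disk around $ v $ carves out these four corners and leaves the complement as a single disk whose boundary contains four distinguished arcs lying on the new boundary circle. In $ Z $ the complement has four components, and because the green vertex is adjacent to all four of them, excising a disk around the green vertex leaves four disks, each with a single arc on the new boundary circle of $ Z $. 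The boundary identification then pairs the four arcs on the $ F_g $ side with the four arcs on the $ Z $ side, attaching each of the four disks from $ Z $ to the single disk from $ F_g $ along one boundary arc; each such attachment preserves the disk property.

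The main obstacle is the bookkeeping in this last step: I need to confirm that the four arcs on the $ F_g $ side really are glued to four \emph{distinct} regions on the $ Z $ side rather than, say, collapsing two of them onto a single region. This is precisely where the specific structure of the green vertex is used: its adjacency to all four complementary regions of $ Z $ is exactly what makes the four attaching arcs from $ Z $ land in four different disks. Once this region-to-region correspondence is established, the remaining ``disk glued to disk along an arc equals a disk'' step is routine.
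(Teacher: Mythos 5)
Your proposal is correct and follows essentially the same route as the paper: the heart of both arguments is that clipping four corners off the single $(8g-4)$-gon of $F_g$ and one corner off each of the four complementary regions of $Z$ (possible precisely because the green vertex is adjacent to all four), then gluing these five disks along the clipped arcs, yields a single disk complement, which gives filling and minimality. Your additional direct bookkeeping (the crossing count $2g+3$ and the $\alpha,\beta,\alpha,\beta$ endpoint matching) is a fine, slightly more explicit way of packaging what the paper leaves implicit.
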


Compare "Proof of the Lower Bound" in \cite{AH}.

\begin{proof}
The key to the construction is that, when the two surfaces are glued together, the complement of the resulting curves is a single disk.  This implies that the new curves form a filling pair, and it immediately follows that they intersect minimally.  As we remarked, $ F_g \smallsetminus (\alpha \cup \beta) $ is an $ (8g-4) $-gon and, when we glue the surface together, the vertices are identified in fours; thus, deleting a disk neighborhood of one of $ F_g $'s vertices ``clips" four angles off of the $ (8g-4) $-gon.

\begin{figure}[ht]
\begin{center}
\includegraphics[scale=.23]{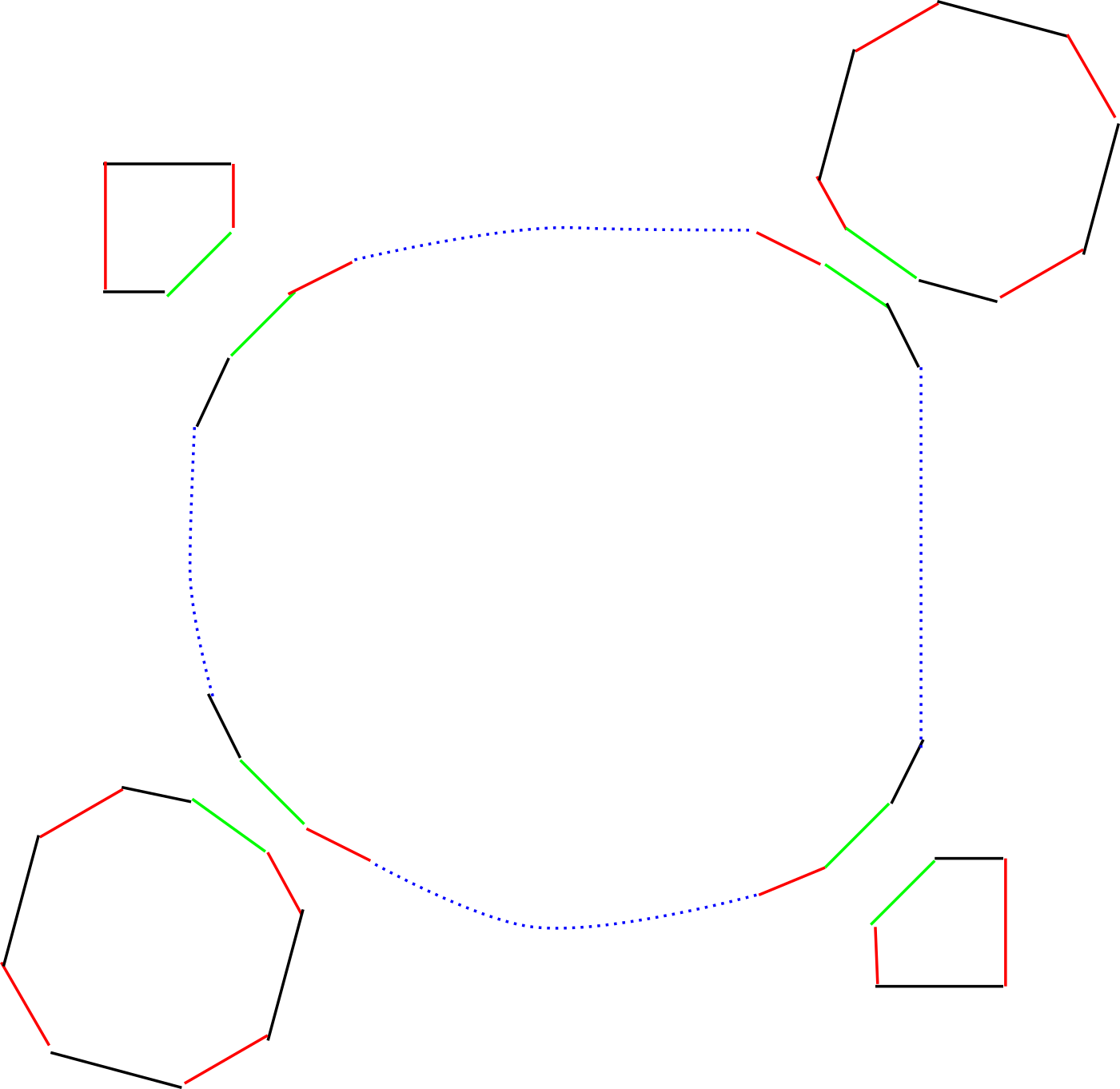}
\caption{$ F_g $ and $ Z $ represented as unions of disks.  The clipped edges (green) correspond to the intersections cut out of their respective surfaces.\label{clippeddisksfig}}
\end{center}
\end{figure}

\indent Similarly, when we delete the disk neighborhood of the green vertex, we clip one angle off each of the four complementary regions of $ Z $.  When we connect the two surfaces together, we glue the regions of $ Z $ onto the $ (8g-4) $-gon along these clipped edges$ - $gluing five disks together to produce one big disk.  This completes the proof.
\end{proof}

\indent With this lemma, all that's necessary to prove that an $ F_g $ exists for each $ g $ are two basal examples$ - $one of odd genus and one of even genus.  While the obvious genus-1 example (shown in Figure \ref{cutsurfacesfig}) suffices for odd genera, the situation is not quite as nice for even genera.  Before proving the existence result, we show the following \textit{nonexistence} result.

\begin{proposition}
An $ F_2 $ does not exist$ - $that is, for a filling pair $ (\alpha, \beta) $ on $ S_2 $, $ i(\alpha, \beta) \geq 4 $.
\end{proposition}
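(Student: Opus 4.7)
The plan is to argue by contradiction. Suppose $(\alpha,\beta)$ is a filling pair on $S_2$ with $i(\alpha,\beta)=3$. By the Euler-characteristic calculation in the proof of Proposition 3.1, $\chi(S_2)=-3+F$ forces $F=1$, so the complement $S_2\smallsetminus(\alpha\cup\beta)$ is a single $12$-gon. The first step is to identify the graph $G=\alpha\cup\beta$ exactly: it has $3$ vertices (the intersection points), $6$ edges (the arcs of the two curves), and every vertex is $4$-valent. Because $\alpha$ is a simple closed curve visiting each of the $3$ intersection points exactly once, its three arcs form a Hamiltonian cycle on $3$ vertices, which can only be the triangle; the same holds for $\beta$. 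Hence $G$ is the \emph{doubled triangle}: three vertices with one $\alpha$-edge and one $\beta$-edge between each pair.

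The main step is to show that no embedding of $G$ into an oriented surface with a single disk complement and with $\alpha,\beta$ crossing transversally at every vertex can produce $S_2$. Such an embedding is encoded by a rotation system at each vertex (cyclic order of incident half-edges); transversality forces the four half-edges at each vertex to alternate in type $\alpha,\beta,\alpha,\beta$, which leaves exactly $2$ admissible cyclic orders per vertex and so $2^{3}=8$ alternating rotation systems on $G$. The $\mathbb{Z}/3$-symmetry cyclically permuting the three vertices, combined with the $\alpha\leftrightarrow\beta$-swap (which exchanges the two options at each vertex), reduces these $8$ systems to just $2$ orbit representatives. For one representative from each orbit I would trace face cycles using the standard rule $f(h)=\operatorname{next}(\bar h)$ and verify that both yield $F=3$ (with length distributions $(4,4,4)$ and $(8,2,2)$, respectively). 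Consequently every alternating rotation system on $G$ has $V-E+F=3-6+3=0$, so the embedded surface is the torus $S_1$, never $S_2$; this contradicts the standing hypothesis.

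The main (and really the only) obstacle is the face-tracing computation for the two representative rotation systems: it is conceptually routine but requires care with the bookkeeping of the twelve half-edges. A more algebraic alternative would use Theorem 2.1: the equation $\sigma Q^{6}\sigma=\tau$ for a putative filling permutation $\sigma\in\Sigma_{12}$ implies $(\sigma Q^{6})^{2}=\tau Q^{6}$, and since $\tau Q^{6}$ is a product of $6$ disjoint transpositions this forces $\sigma Q^{6}$ to have cycle type $(4,4,4)$. One then checks that $\sigma=(\sigma Q^{6})\cdot Q^{6}$ can never be a single $12$-cycle under the remaining alternation and filling-permutation constraints. Either route closes the argument, but I find the rotation-system formulation geometrically cleaner.
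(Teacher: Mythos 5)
Your argument is correct, and it takes a genuinely different route from the paper. The paper works in a concrete planar model of $S_2$ (a rectangle with two holes), fixes a representative candidate for $\alpha$, assumes without loss of generality how the three $\beta$-arcs must connect the intersection points, and then argues informally that the required ``barrier paths'' parallel to the identified sides cannot be completed without extra crossings. You instead force the abstract structure of $\alpha\cup\beta$ (three $4$-valent vertices, each curve a Hamiltonian triangle, hence the doubled triangle), observe that transversality leaves exactly two admissible alternating rotations per vertex, and reduce the resulting $2^3=8$ ribbon structures to two representatives via the cyclic vertex symmetry and the $\alpha\leftrightarrow\beta$ swap (both are graph automorphisms preserving the alternation condition and face counts, so the reduction is legitimate). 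The deferred face-tracing does come out as you claim: the all-agreeing rotation system gives faces of lengths $(4,4,4)$ and the mixed one gives $(8,2,2)$, so every admissible embedding has $F=3$ and $\chi=0$, i.e.\ lives on the torus, never on $S_2$ where a filling pair with $i(\alpha,\beta)=3$ would need a single $12$-gon complement. What your approach buys is a complete, finitely checkable enumeration that sidesteps the paper's implicit choices (the particular position of $\alpha$, the ``without loss of generality'' on the $\beta$-arcs, and the barrier-path heuristic); what the paper's approach buys is brevity and a visual picture consonant with its figures. Your sketched algebraic alternative via $\sigma Q^{6}\sigma=\tau$ is plausible (the cycle-type $(4,4,4)$ deduction for $\sigma Q^{6}$ is sound), but its final step is only asserted, so the rotation-system argument should be the one you write up in full.
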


Compare Theorem 2.16 in \cite{AH}.

\begin{proof}
We attempt to construct a filling pair on $ S_2 $ with intersection number 3.  We represent $ S_2 $ as a rectangle with two holes in it, identifying the opposite sides of the rectangle and the boundaries of the two holes (see Figure \ref{nonF2fig}).

\begin{figure}[ht]
\begin{center}
\includegraphics[scale=.25]{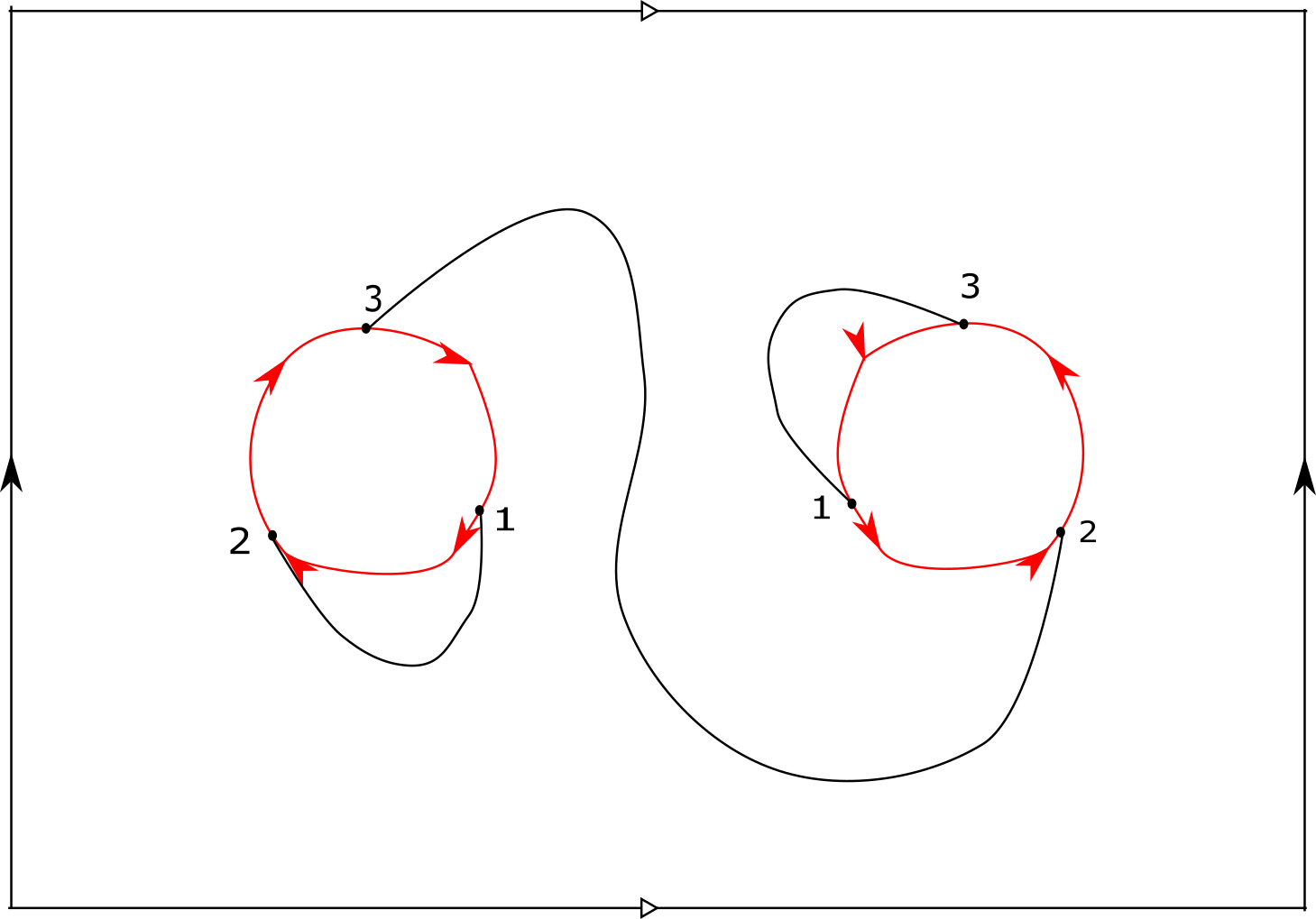}
\caption{$ S_2 $ with a candidate curve $ \alpha $.  Arcs of $ \beta $ cannot connect vertices on the same copy of $ \alpha $.\label{nonF2fig}}
\end{center}
\end{figure}

\indent We will take the red curve to be $ \alpha $ and label three intersections.  To construct $ \beta $, we must construct arcs connecting pairs of vertices; without loss of generality, we may assume the arcs go from vertex 1 to 2, 2 to 3, and 3 to 1.  First, we claim that these arcs must connect vertices on opposite copies of $ \alpha $.  If we connect a pair of vertices on the same copy (1 and 2, say), then necessarily we must connect one of those vertices on the other copy to the third (3 and 1 in the figure).  By sliding 2 backward along $ \alpha $ (or 3 forward), we produce a loop that could be contracted$ - $i.e., $ \alpha $ and $ \beta $ could be isotoped so that they intersect fewer than three times.  Suppose each arc connects a vertex on the left copy to the next-numbered vertex on the right copy (mod 3).

\indent In order to be a filling pair, there must be a ``barrier path" of $ \alpha $-and $ \beta $-arcs parallel to each of the pairs of identified sides of the rectangle:  if there were not such a path parallel to one pair of sides, we could connect two opposite points on the pair perpendicular to them, thus producing an essential scc which is disjoint from $ \alpha \cup \beta $.  But there is no way to construct such barrier paths without having them intersect, implying either that $ \beta $ is not simple, or that $ i(\alpha, \beta)=4 $, a contradiction.
\end{proof}

\begin{theorem}
There exists an $ F_g $ for every $ g \neq 2- $that is, for each such $ g $, there is a filling pair $ (\alpha, \beta) $ on $ S_g $ such that $ i(\alpha, \beta)=2g-1 $.
\end{theorem}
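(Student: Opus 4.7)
The plan is a straightforward induction on $g$ based on Lemma 3.2 (the connected-sum lemma that says $F_g\ \sharp\ Z$ is an $F_{g+2}$), split into the odd and even parity classes. Since the lemma increases genus by exactly $2$, the induction requires two base cases: one odd and one even. Everything then reduces to exhibiting an $F_1$ and an $F_4$, after which the result is immediate by applying Lemma 3.2 repeatedly.

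For the odd case I would simply invoke the standard torus example already alluded to in Section 1: take $S_1$ with $\alpha$ a meridian and $\beta$ a longitude. These intersect once, which equals $2(1)-1$, and their complement in the torus is a single square, so by Fact 3 (or directly by Proposition 3.1's equality case) they are a minimally-intersecting filling pair. Thus $F_1$ exists, and Lemma 3.2 produces an $F_3$, $F_5$, $F_7,\ldots$, by induction on odd $g$.

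The even case is where the real work sits, because Proposition 3.2 rules out $F_2$, so the base case must jump up to $g=4$. My plan is to construct an $F_4$ by hand: on $S_4$ I would exhibit explicit simple closed curves $\alpha,\beta$ with $i(\alpha,\beta)=7=2(4)-1$ whose complement is a single $28$-gon. The cleanest way to verify this is combinatorial via Theorem 2.1: I would write down a candidate permutation $\sigma\in\Sigma_{28}$ consisting of a single $28$-cycle with alternating odd and even entries, check that $\sigma Q^{14}\sigma=\tau$, and appeal to Theorem 2.1 (together with Lemma 2.1, which forces $g=1+\tfrac12(7-1)=4$) to conclude that $\sigma$ realizes a minimally-intersecting filling pair on $S_4$. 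Once $F_4$ is in hand, Lemma 3.2 yields $F_6,F_8,\ldots$ by induction on even $g\geq 4$.

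The main obstacle is producing the explicit $F_4$ and verifying the permutation equation; the induction itself is immediate from Lemma 3.2. In practice I would try to find a geometric $F_4$ first (for example, by attempting the same style of rectangle-with-holes diagram used in the proof of Proposition 3.2, now on $S_4$ with room for seven intersections and barrier paths that successfully close off both identified pairs of sides) and then read off $\sigma$ from the resulting $28$-gon; the algebraic check against (\ref{Npermeq}) is mechanical once the combinatorial data is in place.
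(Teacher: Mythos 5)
Your approach matches the paper's exactly: induct in steps of two via Lemma 3.1 (the $F_g\ \sharp\ Z=F_{g+2}$ lemma), anchored by the base cases $F_1$ (the torus with meridian and longitude, equivalently the filling permutation $(1,2,3,4)$) and $F_4$, with Proposition 3.2 explaining why the even base case must start at genus $4$. The one thing your write-up leaves as a ``to do'' --- an explicit $F_4$ --- is precisely the datum the paper supplies, namely the single $28$-cycle
$\sigma=(1,16,27,10,7,18,15,2,3,20,21,12,11,22,17,4,9,26,19,8,13,28,23,6,5,24,25,14)$,
which you would then verify satisfies $\sigma Q^{14}\sigma=\tau$ and alternates parities, so Theorem 2.1 and Lemma 2.1 give a minimally-intersecting filling pair on $S_4$. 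Without some such explicit witness the even-genus half of the argument is incomplete, but your plan for producing and checking one is exactly right.
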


\begin{proof}
The filling permutation $ \sigma=(1,2,3,4) $ defines an $ F_1 $ (in fact, \textit{the unique} $ F_1 $ up to homeomorphism).  Suppose there is an $ F_{2k+1} $ for some $ k \geq 0 $.  By Lemma 3.1, $ F_{2k+1}\ \sharp\ Z=F_{2k+1+2}=F_{2(k+1)+1} $; by induction, there is an $ F_{2k+1} $ for all $ k \geq 0 $.

\indent By Proposition 3.2, there is no $ F_2 $.  The filling permutation

$ \sigma=(1,16,27,10,7,18,15,2,3,20,21,12,11,22,17,4,9,26,19,8,13,28, $

$ 23,6,5,24,25,14) $ defines an $ F_4 $.  Suppose there is an $ F_{2k} $ for some $ k \geq 2 $.  Again, by Lemma 3.1, $ F_{2k}\ \sharp\ Z=F_{2k+2}=F_{2(k+1)} $; by induction, there is an $ F_{2k} $ for all $ 2 \geq 2 $.
\end{proof}

\section{Generalizing $ Z $}

Combinatorially, the essence of the construction described in the previous section is the arithmetic of vertices:  $ Z $ has the right number of intersections so that, when we remove one from each of $ F_g $ and $ Z $, the remaining intersections of $ Z $ are enough to replace the deleted intersection from $ F_g $, plus four more$ - $thus making up the minimum number on $ F_{g+2} $.  In Section 1, we asked if it's possible to generalize this construction:  can we describe a class of surfaces $ Z_k $ (with filling pairs) such that, if we take a connected sum $ F_l\ \sharp\ Z_k $ like that described in Lemma 3.1, the result is an $ F_{l+k} $?

\indent As a first step, we can compute the number of intersections $ Z_k $ must have.

\begin{remark}
Let $ V $ be the number of intersections of the filling pair on $ Z_k $.  When we delete disk neighborhoods from $ Z_k $ and $ F_l $, the intersection numbers are $ V-1 $ and $ 2l-2$, respectively.  When we glue them together, we get $ V-1+2l-2=V+2l-3 $.  Since the total surface is an $ F_{l+k} $, this must equal $ 2(l+k)-1 $.  Thus,

$ V+2l-3=2l+2k-1 $

$ V=2k+2 $.

Further, by the proof of Lemma 2.1,

$ \chi(Z_k)=-(2k+2)+F $

$ 2-2k=-2k-2+F $

$ F=4 $.

\indent So we have our general description of $ Z_k $:  a $ Z_k $ is a closed surface of genus $ k $, together with a filling pair $ (\alpha, \beta) $ with $ i(\alpha, \beta)=2k+2 $, and such that at least one intersection is adjacent to every region. \indent \indent \indent \indent \indent \indent \indent \ \ $ \square $
\end{remark}

\indent We can see that the original surface $ Z $ fits into this class with $ k=2 $.

\begin{example}
We construct a $ Z_3 $ by reverse engineering:  we begin with a union of four disks and describe how to glue their edges together to produce the desired filling pair on a closed surface.  Consider the disk with five handles shown in Figure \ref{Z3fig}.  The handles are attached so that there are two boundary components, each of which intersects every handle.  Topologically, this is the surface $ S_{2,2} $; by identifying the two boundary components, we get an $ S_3 $.

\begin{figure}[ht]
\begin{center}
\includegraphics[scale=.17]{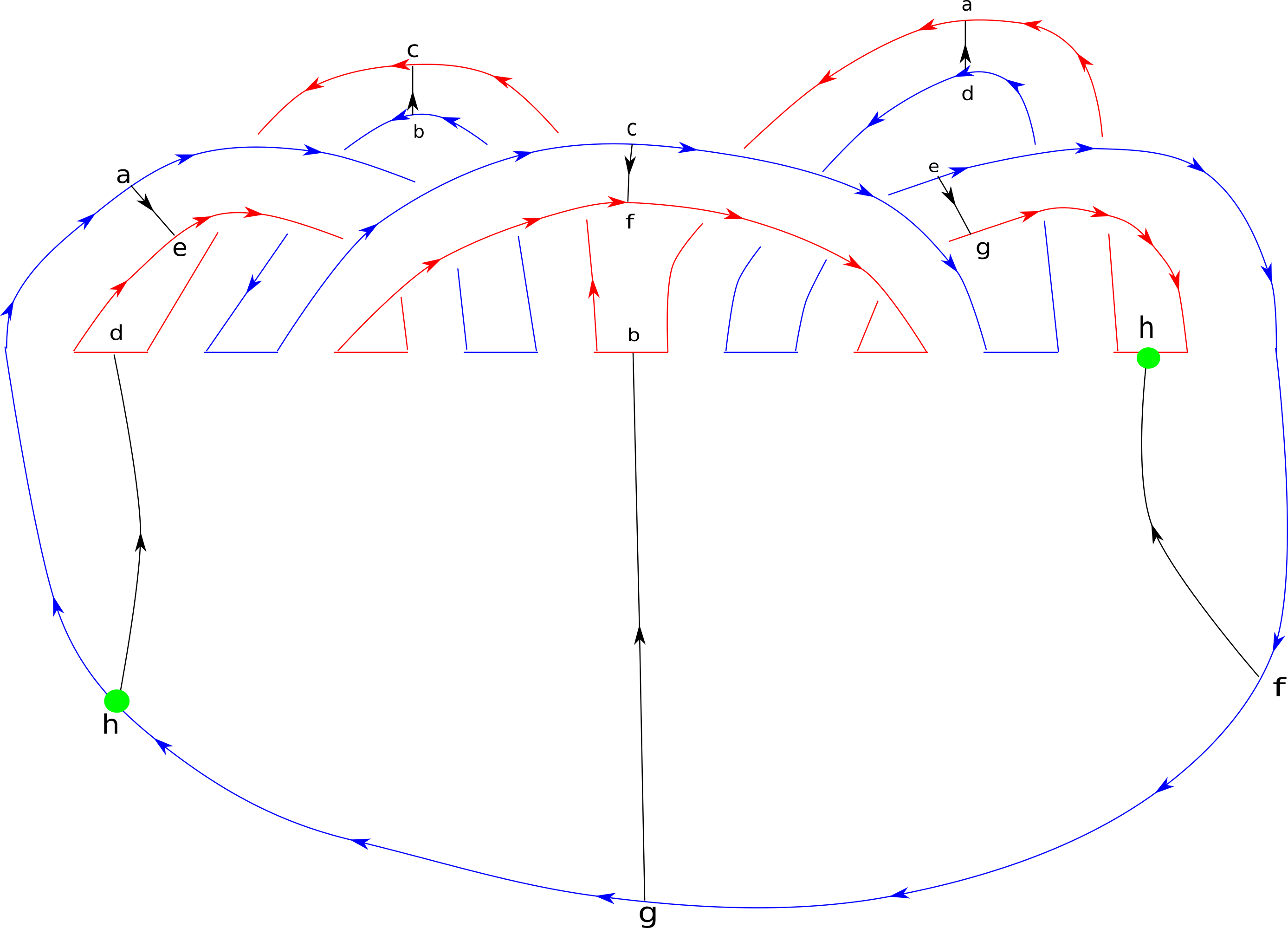}
\caption{A $ Z_3 $ with curves $ \alpha $ (red/blue) and $ \beta $ (black).  Identified points are labeled.  The green vertex is adjacent to all four regions.\label{Z3fig}}
\end{center}
\end{figure}

\indent The identified boundary curves (red/blue) will be $ \alpha $.  To construct $ \beta $, we construct eight arcs connecting the red boundary to the blue boundary (one on each handle and three on the ``feet" of the handles, as shown) so that these arcs partition the surface into four polygonal regions (two rectangles and two 12-gons).  All that remains is to find a way to identify the arcs of the red and blue curves so that the black arcs form a closed curve, and such that one of the intersections is adjacent to all four regions.  In the figure, like-labeled points are identified.

\indent We can describe this $ Z_3 $ by its filling permutation.  If we label the arcs of $ \alpha $ and $ \beta $ so that both curves start and end at the green vertex, we have

$ \sigma_Z=(1,6,25,18)(2,23,28,5,14,27,22,3,12,21,26,15) $

$ (31,24,11,16)(32,13,10,19,20,9,8,29,30,7,4,17). $

\indent This construction can be generalized, at least in principle:  attach $ 2k-1 $ handles to a disk so that there are two boundary components with alternating feet (this implies that both curves intersect each handle).  Construct an arc on each handle connecting the boundary curves, and one each connecting the left-most, center, and right-most feet of the handles to the bottom edge of the disk (by construction, these arcs connect one boundary curve to the other).  This partitions the disk-with-handles into two rectangles and two $ (4k) $-gons.  To complete the construction, we have to find an identification of the boundary curves so that the arcs form a single curve and there is at least one vertex adjacent to all four regions.  It is not apparent that such an identification is always possible. \indent \indent \indent \indent \indent \indent \indent \indent \indent \indent \indent \indent $ \square $
\end{example}

\indent We conclude this section by describing how the connected sum $ F_l\ \sharp\ Z_k $ can be obtained from filling permutations.  The entries of $ \sigma_F $ (the filling permutation for $ F_l $) are in $ \lbrace 1,2, \ldots, 8l-4 \rbrace $ and the entries of $ \sigma_Z $ are in $ \lbrace 1,2, \ldots, 8k+8 \rbrace $.  Let $ E=\lbrace v,w: v \in \lbrace 1,2, \ldots, 8l-4 \rbrace, w \in \lbrace 1,2, \ldots, 8k+8 \rbrace \rbrace $ (we denote $ F_l $-entries by $ v $ and $ Z_k $-entries by $ w $).  We need a function $ A: E \rightarrow \lbrace 1,2, \ldots, 8(l+k)-4 \rbrace $ which relabels all the entries in a consistent fashion.

\indent As noted in Examples 2.1 and 4.1, we adopt the convention of labeling edges of $ Z_k $ so that both curves begin and end at the green vertex.  The other piece of information we will need is which vertex we choose to delete from $ F_l $.  Let $ i $ and $ i+2 $ be the entries corresponding to the positive odd edges adjacent to this vertex (i.e., $ i $ points \textit{into} the vertex and $ i+2 $ points \textit{away}) and let $ j $ and $ j+2 $ be the (entries corresponding to the) positive even edges.  Note that this choice assumes that $ i+2 $ and $ j+2 $ both correspond to positive edges, which need not be true.  We comment on this case below.

\indent Define

\[A=\begin{cases}
v \mapsto v, \ v \leq i \ odd \ or \ v \leq j \ even \\
w \mapsto w+i+4(k+l)-3, \ w \leq 4k+3<4(k+l)-1 \ odd \\
w \mapsto w+j+4(k+l)-4, \ w \leq 4k+4<4(k+l) \ even \\
4(k+l)-1 \mapsto 4(k+l)-1 \\
4(k+l) \mapsto 4(k+l) \\
v \mapsto v+4k, \ i+2 \leq v \leq 4l-3 \ odd \ or \ j+2 \leq v \leq 4l-2 \ even \\
v \mapsto A(v-4l+2)+4(k+l)-2, \ v>4l-2 \\
w \mapsto w+i-4k-5, \ 4k+4<w<8(k+l)-1 \ odd \\
w \mapsto w+j-4k-6, \ 4k+4<w<8(k+l) \ even \\
8(k+l)-1 \mapsto 1 \\
8(k+l) \mapsto 2
\end{cases}
\]

\indent This is our assembly function.  Note that line 2 only applies when $ 4k+3<4(k+l)-1 $; equality implies that $ l=1 $ and, in this case, line 4 tells us to send $ 4k+3 $ (the last positive odd edge of $ Z_k $) to itself (the first negative odd edge of $ F_{k+1} $).  Lines 3 and 5, 9 and 11, and 10 and 12 are similarly related.  As we will see, there is a fundamental difference between the cases $ l=1 $ and $ l>1 $.  Note also that line 6 does not apply to odd edges if $ i=4l-3 $ (nor to even edges if $ j=4l-2 $).  In these cases, line 2 will have $ 4k+3 \mapsto 8(k+l)-3 $ and line 3 will have $ 4k+4 \mapsto 8(k+l)-2- $but the largest entry of the new filling permutation is supposed to be $ 8(k+l)-4 $.  Here, we simply reduce mod $ 8(k+l)-4 $:  $ 8(k+l)-3=1,\ 8(k+l)-2=2 $.  This reflects the cyclicity of the labels:  if $ i=4l-3 $ (the last odd entry for a positive edge) points into the deleted crossing, then 1 (the first odd entry for a positive edge) points away.  Finally, observe that the orientation of $ F_l $-edges is preserved, while the orientation of $ Z_k $-edges is reversed.  This reflects the fact that one subsurface is oriented positively, and the other negatively, with respect to the boundary curves that are identified in the connected sum.

\begin{example}
We will attach the $ Z_3 $ from the previous example to an $ F_3 $ using filling permutations and the assembly function.  Let

$ \sigma_F=(1,2,13,20,7,6,19,14,5,10,11,16,9,8,15,12,3,4,17,18) $.  We apply $ A $ to the entries of $ \sigma_F $ and the \textit{inverse} of $ \sigma_Z $ (again, reflecting the opposite orientations of the two subsurfaces).  We choose $ i=3 $ and $ j=2 $.

$ A(\sigma_F)=(1,2,25,44,19,18,43,38,17,22,23,40,21,20,39,24,3,16,41,42) $.

$ A(\sigma_Z^{-1})=(2,11,28,25)(39,10,7,34,27,6,13,36,29,12,9,24) $

$ (38,35,8,17)(3,26,31,14,15,30,33,4,5,32,37,16) $.

\indent We arranged the cycles of $ \sigma_Z $ so that the first cycle begins with 1 and each subsequent cycle begins with the opposite of the last entry of the previous one.  Notice that, in the relabeled version of this permutation, these first and last entries are duplicated in the relabeled version of $ \sigma_F $, and that entries which begin and end a cycle of $ A(\sigma_Z^{-1}) $ are consecutive in $ A(\sigma_F) $.  This tells us exactly how to ``splice" the cycles of $ A(\sigma_Z^{-1}) $ into $ A(\sigma_F) $.

\indent The result is

$ \sigma=(1,2,11,28,25,44,19,18,43,38,35,8,17,22,23,40,21,20,39,10,7, $

$ 34,27,6,13,36,29,12,9,24,3,26,31,14,15,30,33,4,5,32,37,16,41,42) $. \linebreak
It is straightforward to verify that this is a filling permutation for an $ F_6 $.
\begin{flushright}
$ \square $
\end{flushright}
\end{example}

\section{Decomposition of $ F_g $s as Connected Sums}

We have shwon that we can assemble a larger-genus $ F $ out of the smaller-genus pieces we've described.  Our final question from Section 1 was whether it's possible to reverse our perspective:  given an $ F_g $, is it possible to recognize that it's assembled from such pieces, and is it possible to determine how to disassemble them?  $ F_g $ (with filling pair $ (\alpha, \beta) $) is decomposable as a connected sum $ F_l\ \sharp\ Z_k $ if and only if there exists a separating curve $ \gamma $ with $ i(\alpha, \gamma)=i(\beta,\gamma)=2 $, such that one component of $ F_g \smallsetminus \gamma $ is an $ F_{l,1} $, and the other is a $ Z_{k,1} $.  After cutting them apart, we complete the disassembly by ``patching" each component with a decorated disk $ \bigoplus $, thus replacing the missing intersection and rendering everything with boundary (the surfaces and their arcs) closed.

\indent In order to state our decomposability condition, we must consider an aspect of $ Z_k $ that we haven't yet touched upon.  We have been able to show how many vertices $ Z_k $ must have (which determines how many edges) and how many regions, but we haven't given any indication of how many edges each region can have.  The original $ Z_2 $ has two rectangles and two octagons.  The $ Z_3 $ we constructed in Example 4.1 has two rectangles and two 12-gons, and our hypothetical generalization of that construction produces a surface with two rectangles and two $ (4k) $-gons.  We might conjecture that every $ Z_k $ has such a configuration of regions but, as we will see, this turns out to be false.

\indent In order to capture $ Z_k $ in full generality, then, we give the following description of its regions.  The \textit{type} of a $ Z_k $ is an ordered quadruple $ (r,s,t,u) $ (modulo cyclic permutation), where $ r,s,t $, and $ u $ are even integers greater than 2 such that $ r+s+t+u=8k+8 $.  These are the numbers of sides of each region, and their order indicates the way that they're glued together to form the surface:  each region must have an even number of edges, the smallest possible region is a rectangle, and the total number of edges is $ 8k+8 $.  The order of the entries in (a representative of) the type is the order in which the regions are traversed by a curve encircling the green vertex, oriented clockwise (thus any cyclic permutation of $ (r,s,t,u) $ gives the same type).  We will refer to the type of a decomposition as the type of the associated $ Z_k $.

\begin{theorem}
Let $ F_g $ be a surface of genus $ g>1 $, together with a filling pair $ (\alpha, \beta) $ which intersect $ 2g-1 $ times, and let $ \sigma $ be the filling permutation of $ F_g $.  There exists a decomposition $ F_g=F_l\ \sharp\ Z_k $ of type $ (r,s,t,u) $ if and only if the following two conditions are satisfied:

\begin{enumerate}
\item There exist numbers $ x,a,y,b \in \lbrace 1,2, \ldots, 8g-4 \rbrace $ such that:
\begin{equation}
Q^N \sigma^{r-1}(x)=a
\label{xtoaeqn}
\end{equation}
\begin{equation}
Q^N \sigma^{s-1}(a)=y
\label{atoyeqn}
\end{equation}
\begin{equation}
Q^N \sigma^{t-1}(y)=b
\label{ytobeqn}
\end{equation}
\begin{equation}
Q^N \sigma^{u-1}(b)=x
\label{btoxeqn}
\end{equation}
\begin{equation}
Q^N \tau^{2k+1}(x)=y
\label{xtoyeqn}
\end{equation}
\begin{equation}
Q^N \tau^{2k+1}(a)=b.
\label{atobeqn}
\end{equation}
\item For $ (v,p),(w,q) \in \lbrace (x,r),(a,s),(y,t),(b,u) \rbrace $, $ w=\sigma^i(v) $ for $ i<p-1 $ implies $ \sigma^{p-1}(v)=\sigma^j(w) $ for $ q-1<j<8g-4 $.
\end{enumerate}
\end{theorem}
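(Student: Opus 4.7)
The plan is to interpret the four labels $x, a, y, b$ as polygon edges in the fundamental $(8g-4)$-gon of $F_g$ whose interiors contain the chord-endpoints of a separating curve $\gamma$ in a decomposition $F_g = F_l\ \sharp\ Z_k$.  When $\gamma$ is drawn in the polygon it appears as four chord-arcs whose eight boundary endpoints partition the polygon into one inner sub-region, corresponding to $F_l$'s $(8l-4)$-gon (degenerate when $l=1$), and four outer sub-regions of sizes $r, s, t, u$ corresponding to the $Z_k$-regions in cyclic order around the would-be green vertex.  I would take $x, a, y, b$ to be the first polygon edges (in $\sigma$'s clockwise order) of the four outer sub-regions, with $x$ and $a$ lying on the positive polygon-copies of the two merged $\beta$- and $\alpha$-arcs at the would-be deleted/green vertex, and $y$ and $b$ lying on the corresponding negative copies; in Example~4.3 this choice recovers $(x, a, y, b) = (2, 3, 38, 39)$.

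For the forward direction I would assume the decomposition of type $(r, s, t, u)$ exists and verify each of the six equations directly.  Equations~(\ref{xtoaeqn})--(\ref{btoxeqn}) encode the combinatorial tour of $\gamma$: applying $\sigma^{p-1}$ to the first polygon edge of an outer sub-region of size $p$ lands on its last polygon edge---a split edge containing one chord-endpoint---and $Q^N$ identifies that edge with its opposite polygon edge, which carries the second polygon-copy of the same $\gamma$-crossing and is therefore the first polygon edge of the next outer sub-region in the cyclic order induced by traversing $\gamma$.  Equations~(\ref{xtoyeqn}) and (\ref{atobeqn}) follow from the observation that, in $F_g$'s arc-labelling, the two merged $\beta$-arcs (and the two merged $\alpha$-arcs) are separated by exactly $2k+1$ positions along the relevant $\tau$-cycle, because the $2k$ pure $Z_k$-arcs of $\beta$ (respectively $\alpha$) lie strictly between them; the factor of $Q^N$ accounts for the orientation reversal on $Z_k$-edges built into the assembly function $A$.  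Condition~2 is vacuously satisfied because the four outer sub-regions occupy disjoint runs of consecutive polygon edges in $\sigma$'s cyclic order, so no $w \in \lbrace x, a, y, b \rbrace$ lies in the interior of another's $\sigma$-run.

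For the converse, given $x, a, y, b$ satisfying the six equations and condition~2, I would construct $\gamma$ by marking an interior point in each of the eight polygon edges $\lbrace x, \sigma^{r-1}(x), a, \sigma^{s-1}(a), y, \sigma^{t-1}(y), b, \sigma^{u-1}(b) \rbrace$---these come in four $Q^N$-paired couples by (\ref{xtoaeqn})--(\ref{btoxeqn})---and joining each $Q^N$-paired couple by a simple arc through the polygon interior.  Condition~2 is exactly what is needed to ensure that the four $\sigma$-runs of lengths $r, s, t, u$ do not pairwise interleave, so the four chords are non-crossing, and the cyclic closure in (\ref{xtoaeqn})--(\ref{btoxeqn}) rules out nested configurations, forcing the four chords to be arranged in parallel and to subdivide the $(8g-4)$-gon into exactly five sub-regions of sizes $r, s, t, u,$ and $8l-4$.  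When the polygon is reglued to recover $F_g$, the chord-endpoints identify according to $Q^N$ and the four chord-arcs assemble into a single simple closed curve $\gamma$ meeting $\alpha$ and $\beta$ each twice; equations~(\ref{xtoyeqn}) and (\ref{atobeqn}) then force the arcs of $\alpha$ and $\beta$ through these crossings to have the correct local structure so that patching each side of $\gamma$ with the prescribed disk yields an $F_l$ and a $Z_k$ of type $(r, s, t, u)$, which an Euler-characteristic count via Lemma~2.1 and Remark~4.1 confirms.

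The main technical obstacle is the orientation bookkeeping---tracking how the orientation reversal on $Z_k$-edges built into $A$ propagates into the $Q^N$ factors in (\ref{xtoyeqn}) and (\ref{atobeqn})---together with verifying that the combination of condition~2 and the cyclic closure of (\ref{xtoaeqn})--(\ref{btoxeqn}) exactly captures the non-crossing, non-nested configuration of the four chords in the converse direction.  A secondary subtlety is the $l = 1$ case, where the inner sub-region has no pure $F_l$ polygon edges and is bounded only by the four chords themselves; the combinatorial case splits here parallel the case splits between lines 2/4 and 3/5 of the assembly function in Section~4.
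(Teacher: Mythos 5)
Your necessity argument is essentially the paper's: the anchor-edge interpretation of $x,a,y,b$, the use of $\sigma^{p-1}$ to cross each cordoned region and $Q^N$ to jump to the opposite edge, the count of $2k+1$ steps of $\tau$ for the passage of $\alpha$ and $\beta$ through the $Z_k$-side, and the vacuous verification of Condition 2 all match (your specific sign/positivity conventions for which of $x,a,y,b$ are positive edges are off from the paper's, and your Example 4.3 quadruple is a cyclic re-indexing of the paper's $(3,38,39,2)$, but these are bookkeeping issues, not gaps).

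The gap is in sufficiency, at exactly the point you compress into one sentence. First, your claim that ``the cyclic closure in (\ref{xtoaeqn})--(\ref{btoxeqn}) rules out nested configurations'' is unsupported: Condition 2 only prevents the four chords from crossing, and in fact its content (as the paper reads it) is that a $w$-arc beginning inside a $v$-run must be \emph{nested} in the $v$-arc; nothing in the four $\sigma$-equations alone excludes nesting, and the paper eliminates nesting only at the very end, as a consequence of $\gamma$ being separating. Second, and more fundamentally, you never prove that $\gamma$ is separating. Cutting the $(8g-4)$-gon into five sub-regions of the right sizes does not yet give a decomposition of the closed surface: one must show that every edge of the polygon not crossed by $\gamma$ has its opposite edge on the same side of $\gamma$, since otherwise the side identifications reconnect the ``inner'' region to the ``outer'' ones and $\gamma$ is non-separating. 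This is where (\ref{xtoyeqn}) and (\ref{atobeqn}) actually do their work, and it is the bulk of the paper's proof: it introduces $\rho'=(x,a,y,b)(\overline{x},\overline{b},\overline{y},\overline{a})$, shows that $\tau_\alpha\phi_{\alpha\gamma}\rho'$ (and its $\beta$-analogue) traces the boundaries of the complementary regions of $\alpha\cup\gamma$, computes its four cycles explicitly in terms of $\tau$-powers of $x,y,\overline{x},\overline{y}$, and uses the anticommutation of $Q^N$ and $\tau$ to show that opposites of non-$\rho'$ entries always lie in like cycles. Your appeal to ``correct local structure'' plus an Euler-characteristic count assumes what must be proved: the Euler count identifies the genera only after separation (and hence non-nesting, hence the type $(r,s,t,u)$) has been established. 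The $l=1$ issue you flag is real but secondary; the paper isolates it as the case $k=g-1$, where $y=\overline{x}$ and $b=\overline{a}$ and Condition 1 in fact implies Condition 2 (Proposition 5.1), whereas the missing separating argument is the essential content of the converse.
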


\indent Note that $ Q^N \tau^{2k+1} $ has order 2 (we have shown that $ Q^N $ and $ \tau $ anticommute, which means that they generate a subgroup isomorphic to $ D_{4g-2} $; the element $ Q^N \tau^{2k+1} $ is a ``reflection," and so it has order 2), so this permutation also sends $ y $ to $ x $ and $ b $ to $ a $.  Necessarily, $ x $ and $ y $ have the same parity (odd/even) and opposite orientations, as do $ a $ and $ b $.

\begin{example}
We will show that this condition detects the decomposition of our $ F_6 $ into the $ F_3 $ and $ Z_3 $ from which we assembled it.  We will also show that it predicts a decomposition into an $ F_1 $ and a $ Z_5 $.

\indent In the first case, we have $ k=3 $ and $ N=22 $.  We see that:

$ Q^{22} \sigma^{11}(3)=38 $

$ Q^{22} \sigma^3(38)=39 $

$ Q^{22} \sigma^{11}(39)=2 $

$ Q^{22} \sigma^3(2)=3 $

$ Q^{22} \tau^7(3)=39 $

$ Q^{22} \tau^7(38)=2 $.  These were precisely the splice points at which we combined the permutations $ A(\sigma_F) $ and $ A(\sigma_Z^{-1}) $, and the powers of $ \sigma $ agree with the type of the $ Z_3 $ we constructed.  Moreover:

$ Q^{22} \sigma^{27}(23)=38 $

$ Q^{22} \sigma^5(38)=1 $

$ Q^{22} \sigma^9(1)=16 $

$ Q^{22} \sigma^3(16)=23 $.  Note that if $ k=g-1 $, then $ \tau^{2k+1}=\tau^{2g-1}=1 $, so we get that $ x $ and $ y $ are opposites, as are $ a $ and $ b $.  If our condition is sufficient, then we should find a decomposition of the surface into an $ F_1 $ and a $ Z_5 $.  Note that the $ Z_5 $ will not have two rectangles and two 20-gons.  $ \square $
\end{example}

\begin{proof}
\textit{Necessity.}

\indent Suppose there exists a curve $ \gamma $ which produces a decomposition of the appropriate type and consider the arc of $ \gamma $ adjacent to the $ r $-gon.  On the $ (8g-4) $-gon, this arc cuts two of the edges of the $ r $-gon in half.  The edges between these ``anchor points" are the edges of the $ r $-gon, but they are also consecutive edges of the $ (8g-4) $-gon, so we can move from one anchor point (call it $ x $) to the other by applying a power of $ \sigma $.  Since we move $ r-1 $ spaces between anchors, the other anchor point is $ \sigma^{r-1}(x) $.  $ Q^N $ takes us to an anchor of the next $ \gamma $-arc.  Let $ a=Q^N \sigma^{r-1}(x) $.  This satisfies (\ref{xtoaeqn}).

\indent Let $ \overline{y} $ be the other anchor of this second $ \gamma $-arc.  By a similar argument, $ \sigma^{s-1} $ takes us from one anchor to the other, but it is not clear whether $ \sigma^{s-1}(a)=\overline{y} $ or vice versa.  If $ y $ and $ \overline{b} $ are the anchors of the third arc (implying that $ b $ and $ \overline{x} $ are the anchors of the final arc), then $ \sigma^{t-1} $ takes us from one of $ y $ and $ \overline{b} $ to the other, and $ \sigma^{u-1} $ takes us from one of $ b $ and $ \overline{x} $ to the other.  These arcs ``cordon off" the regions of $ Z_k $ along the boundary of the $ (8g-4) $-gon (compare Figure \ref{clippeddisksfig}).

\indent Orient $ \gamma $ so that $ x $ is the initial edge of the first arc.  All the arcs must point in the same direction, since the regions of $ Z_k $ all have to be on the same side of $ \gamma $, and we have defined $ x $ and $ \overline{a} $ so that their arc points clockwise.  Thus, $ a, y $, and $ b $ are all initial edges of their respective arcs, implying that (\ref{atoyeqn}), (\ref{ytobeqn}), and (\ref{btoxeqn}) are satisfied.

\indent Now consider $ F_g $ in closed form.  When we remove $ Z_k $ from $ F_l $, we will be removing a string of \textit{consecutive} arcs of $ \alpha $ and $ \beta $.

\begin{figure}[ht]
\begin{center}
\includegraphics[scale=.5]{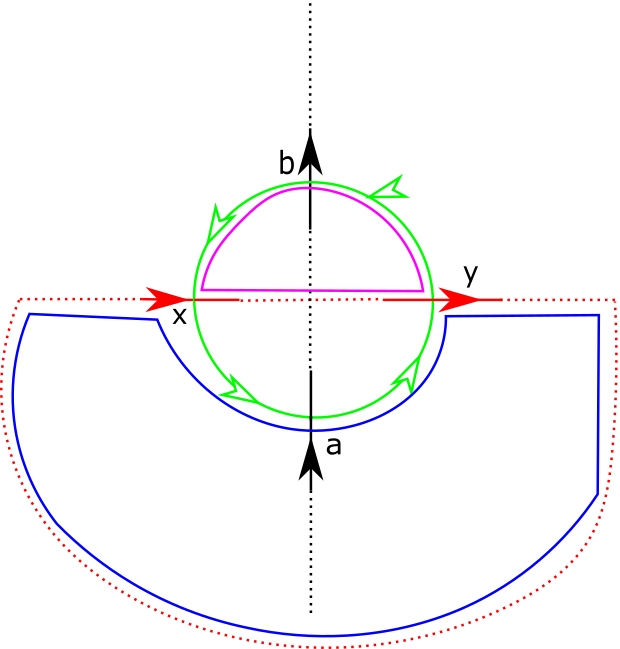}
\caption{The curves $ \alpha $ (red), $ \beta $ (black), and $ \gamma $ (green) on $ F_g $.  The region inside $ \gamma $ is $ Z_k $ and the region outside is $ F_l $.  A region of $ F_g \smallsetminus (\alpha \cup \gamma) $ contained in $ Z_k $ is outlined in purple, and a region contained in $ F_l $ is outlined in blue.\label{gammaregionsfig}}
\end{center}
\end{figure}

\indent Suppose (as in Figure \ref{gammaregionsfig}) that $ x $ is a positive $ \alpha $-edge and that $ a $ is a negative $ \beta $-edge.  $ \gamma $ cuts the arc at $ x $ in half; $ \alpha $ enters $ Z_k $, passes through all the intersections inside it, then exits, so the next half-arc will be at the end of this path.  Since $ Z_k $ contains $ 2k+2 $ arcs, we get from one half-arc to the other by moving forward $ 2k+1 $ spaces along $ \alpha $.  Since $ x $ is a positive edge, $ \tau^{2k+1} $ takes us to the positive side of the other half-arc.  But since this edge is $ \overline{y} $, we have (\ref{xtoyeqn}).  Since $ a $ is a negative edge, the path through $ Z_k $ moves backward along $ \beta $ but, again, $ \tau^{2k+1} $ takes us to the negative side of the other half-arc.  Since this is $ \overline{b} $, we have (\ref{atobeqn}), and therefore Condition 1 is satisfied.

\indent Let $ (v,p), (w,q) \in \lbrace(x,r),(a,s),(y,t),(b,u) \rbrace $.  Each of the edges $ v $ and $ w $ are initial edges of an arc of $ \gamma $.  The $ v $-arc cordons off a $ p $-gon and the $ w $-arc cordons off a $ q $-gon.  If $ w=\sigma^i(v) $ for $ i<p-1 $, then the $ w $-arc originates inside the $ p $-gon.  This implies either that the $ v $-arc and the $ w $-arc intersect (so $ \gamma $ is not simple), or that the $ q $-gon is nested inside the $ p $-gon (so $ \gamma $ ``subcordons" a region which has already been cordoned off).  Both of these are contradictions, implying that there is no such $ v $ and $ w $, so that Condition 2 is satisfied vacuously.
\end{proof}

\indent In order to prove the converse, we assume that the equations in Condition 1 are satisfied, and we use this to define the curve $ \gamma$.  We can describe this curve as a sequence of oriented arcs in the $ (8g-4) $-gon.  We let $ x,a,y $, and $ b $ be the initial edges of the arcs; the terminal edges are $ \overline{a}, \overline{y}, \overline{b} $, and $ \overline{x} $ (respectively).  By \ref{xtoaeqn}, \ref{atoyeqn}, \ref{ytobeqn}, and \ref{btoxeqn}, these arcs cordon off regions of the right size and in the right order to fit the desired type of the decomposition.  In order for this to produce the decomposition we want, we must show that none of these regions overlap, and that $ \gamma $ is separating.  This holds if the opposite of an edge on the left side of the curve is also on the left side.

\begin{proof}
\textit{Sufficiency.}

\indent Condition 2 implies that no two arcs of $ \gamma $ intersect ($ \gamma $ is simple).  This turns out to be sufficient to prove the result.

\indent To see that $ \gamma $ is simple, let $ (v,p), (w,q) \in \lbrace(x,r),(a,s),(y,t),(b,u) \rbrace $, and suppose that $ w=\sigma^i(v) $ for $ i<p-1 $.  As we observed above, this implies that the $ w $-arc originates between the anchor points of the $ v $-arc (since the boundary of the $ (8g-4) $-gon is a circle, any vertex is technically ``between" the anchor points of the $ v $-arc:  we mean here that $ w $ is on the shorter of the two segments connecting the anchor points).  The terminal edge of the $ w $-arc is $ \sigma^{q-1}(w) $; by Condition 2, $ \sigma^{p-1}(v)=\sigma^j(w) $ for $ q-1<j<8g-4 $, which means that, moving clockwise from $ w $, we will reach the terminus of the $ w $-arc before we reach the terminus of the $ v $-arc$ - $i.e., the $ w $-arc is nested inside the $ v $-arc, and therefore they do not intersect.

\indent We now show that, if $ \gamma $ is simple, then $ \gamma $ is separating.  There are two cases:  $ k=g-1 $ and $ k<g-1 $.  In the latter case, there are eight edges that anchor $ \gamma $:  $ x,a,y,b $, and their opposites (and all of these edges must be distinct).  In the former case, as we have noted, we have $ \overline{x}=y $ and $ \overline{a}=b $, so there are only four.  This means that each one is both an initial and a terminal edge, and so each one is cut into thirds by the arcs of $ \gamma $.  The middle thirds of each edge are on the right side of $ \gamma $, and the outer thirds (as well as all the other edges of $ F_g $) are on the left side.  Since the opposite of a middle third is also a middle third, it follows that $ \gamma $ is separating.

\indent Suppose that $ k<g-1 $.  Since $ \gamma $ crosses each edge of the $ (8g-4) $-gon at most once, we can describe its path through the surface in terms of the edges it crosses.  Let $ \rho=(x, \overline{a}, a, \overline{y}, y, \overline{b}, b, \overline{x}) $.  This permutation is defined by choosing an arc of $ \gamma $ and recording in a cycle the numbers of the edges it crosses in the order in which they occur:  from an initial edge to its terminal edge, from a terminal edge to its opposite (the next initial edge), and so on, until the cycle is completed.

\indent Let $ \tau=\tau_\alpha \tau_\beta $ and $ Q^N=\phi_\alpha \phi_\beta $, where we have factored each permutation into a portion that moves only edges of $ \alpha $ and one that moves only edges of $ \beta $.  Recall that $ \tau $ acts by cycling positive edges of $ \alpha $ and $ \beta $ in the forward direction, and negative edges in the backward direction; we can obtain from $ \rho $ a permutation which acts similarly on the edges of $ \gamma $:  $ \rho'=(x,a,y,b)(\overline{x}, \overline{b}, \overline{y},\overline{a}) $.  Note that $ \rho $ and $ \rho' $ are related by the equation $ \rho'=[\phi_\gamma, \rho] $, where $ \phi_\gamma $ is made up of the 2-cycle factors of $ Q^N $ that move only edges appearing in $ \gamma $ (in fact, $ \phi_\gamma \rho $ is the initial-edge cycle of $ \rho' $ and $ \phi_\gamma \rho^{-1} $ is the terminal-edge cycle; note that they commute with one another).

\indent We will show that the permutation $ \tau_\alpha \phi_{\alpha \gamma} \rho' $ (where $ \phi_{\alpha \gamma} $ consists of the factors of $ Q^N $ that move only edges that appear in \textit{both} $ \alpha $ and $ \gamma $) traces out the boundaries of the complementary regions of $ \alpha \cup \gamma $ (and likewise for $ \beta $).

\indent To trace out one boundary component of a complementary region, we choose a starting edge and move along $ \alpha \cup \gamma $, keeping the same region to the right of $ \alpha \cup \gamma $.  Suppose $ x $ is an $ \alpha $-edge.  Starting at $ x $ (an initial edge) and moving along $ \gamma $ takes us to $ \overline{a} $, which is an edge of $ \beta $ (note that, since $ \gamma $ doesn't intersect itself, the boundary curves are only formed by $ \gamma $-edges intersecting $ \alpha $-edges).  We want to pass through this edge and proceed along the next arc of  $\gamma $.  $ \rho' $ sends $ x $ to $ a $ (the next initial edge); $ \phi_{\alpha \gamma} $ and $ \tau_\alpha $ fix $ a $ (since it's not an edge of $ \alpha $), and so $ a $ is the second entry of this cycle of $ \tau_\alpha \phi_{\alpha \gamma} \rho' $.

\indent $ \rho'(a)=y $, the next initial edge.  In order to reach this edge, though, we have crossed over $ \alpha $; instead, we would like to follow $ \alpha $ (again, keeping the same region on the right).  So we return to the terminal edge on the other side of $ \alpha $ by sending $ y $ to its opposite, which is $ \phi_{\alpha \gamma}(y) $.  If $ \overline{y} $ is a positive edge, we should follow $ \alpha $ in the forward direction and, if it's a negative edge, we should follow in the backward direction:  either way, the next edge along the boundary will be $ \tau_\alpha(\overline{y}) $.

\indent Proceeding in this fashion, we trace out a sequence of $ (\alpha \cup \gamma) $-edges, always keeping the same region on the right.  If the next edge is an initial edge of $ \gamma $, we should follow it in the forward direction and, if it's a terminal edge, we should follow it in the backward direction$ - $in either case, $ \rho' $ moves us to the appropriate edge.  If the edge doesn't intersect $ \gamma $, then both $ \rho' $ and $ \phi_{\alpha \gamma} $ fix it, and $ \tau_\alpha $ advances it in the appropriate direction along $ \alpha $.

\indent If we move along an edge of $ \gamma $ and come to a $ \beta $-edge, we should keep moving ($ \phi_{\alpha \gamma} $ and $ \tau_\alpha $ fix this edge, and then $ \rho' $ sends it on along $ \gamma $).  If we cross an $ \alpha $-edge (initial or terminal), we should back up to the previous edge (terminal or initial, via $ \phi_{\alpha \gamma} $), and then move along $ \alpha $ according to $ \tau_\alpha $.  When we complete a circuit of one boundary component, we will have completed a cycle of $ \tau_\alpha \phi_{\alpha \gamma} \rho' $.  For each boundary component of $ \alpha \cup \gamma $, there is one cycle of $ \tau_\alpha \phi_{\alpha \gamma} \rho' $.  Observe that it fixes all edges of $ \beta $ that don't intersect $ \gamma $.

\indent It's important to note that $ \tau_\alpha \phi_{\alpha \gamma} \rho' $ only ``traces" the boundaries of the complementary regions:  not every edge of the boundary will appear in one of its cycles and some edges will appear that are not on the boundary.  For instance, the edge $ a $ belongs to $ \beta $, so it is not on the boundary of $ \alpha \cup \gamma $.  In general, a $ \beta $-entry will appear each time we cross an arc of $ \beta $ and an $ \alpha $-entry will be missing each time we switch from following $ \gamma $ to following $ \alpha $.

\indent If we again suppose $ x $ is an $ \alpha $-edge, we claim that the cycles of $ \tau_\alpha \phi_{\alpha \gamma} \rho' $ are:

\begin{equation}
(x,a,\tau(\overline{y}),\tau^2(\overline{y}),\ldots, \tau^{2l-3}(\overline{y}))
\label{xcycle}
\end{equation}
\begin{equation}
(y,b,\tau(\overline{x}),\tau^2(\overline{x}),\ldots,\tau^{2l-3}(\overline{x}))
\label{ycycle}
\end{equation}
\begin{equation}
(\overline{x},\overline{b},\tau(y),\tau^2(y),\ldots,\tau^{2k}(y))
\label{xbarcycle}
\end{equation}
\begin{equation}
(\overline{y},\overline{a},\tau(x),\tau^2(x),\ldots,\tau^{2k}(x)).
\label{ybarcycle}
\end{equation}

\indent The computations of (\ref{xcycle}) and (\ref{ycycle}) are similar, as are the computations of (\ref{xbarcycle}) and (\ref{ybarcycle}), so we will show the computations of (\ref{xcycle}) and (\ref{xbarcycle}).

\indent $ \rho'(x)=a $, which is fixed by the other two permutations; $ \rho'(a)=y $, and $ \phi_{\alpha \gamma}(y)=\overline{y} $, so the next entry is $ \tau(\overline{y}) $.  It remains to show that none of the edges $ \tau^i(\overline{y}) $ is an entry of $ \rho' $ for $ 1 \leq i \leq 2l-3 $.  $ y $ has the same parity as $ \overline{y} $; since $ \tau $ preserves parity of edges, if $ \tau^i(\overline{y}) $ belongs to $ \rho' $, it must be an edge with the same parity as $ y $.  However, $ x $ and $ y $ have opposite orientations; since $ \tau $ preserves orientation, the only possibilities are $ x $ and  $ \overline{y} $.  $ \overline{y}=\tau^{2k+1}(x) $, so $ x=\tau^{-(2k+1)}(\overline{y})=\tau^{2l-2}(\overline{y}) $ (since $ 2g-1=2k+1+2l-2 $), and $ \overline{y}=\tau^{2g-1}(\overline{y}) $.  Since $ l<g,\ i<2l-2<2g-1 $, so $ \tau^i(\overline{y}) $ cannot equal $ x $ or $ \overline{y} $.  Note that $ \tau^{2l-2}(\overline{y})=\tau^{2g-1}(x)=x $, completing the cycle.

\indent The first three entries of (\ref{xbarcycle}) are determined exactly as those of (\ref{xcycle}).  All that remains to show is that none of the edges $ \tau^i(y) $ is an entry of $ \rho' $ for $ 1 \leq i \leq 2k $.  If there were such an entry, it could only be $ y $ or $ \overline{x} $.  $ \overline{x}=\tau^{2k+1}(y) $ and $ y=\tau^{2g-1}(y) $.  Since $ k<g-1,\ i<2k+1<2g-1 $, so $ \tau^i(y) $ cannot equal $ y $ or $ \overline{x} $.  $ \tau^{2k+1}(y)=\overline{x} $, completing the cycle.

\indent To show that $ \gamma $ is separating, we must show that, for any edge of the $ (8g-4) $-gon that is not an entry of $ \rho' $, its opposite lies on the same side of $ \gamma $.  To do this, we examine the cycles of $ \tau_\alpha \phi_{\alpha \gamma} \rho' $ (and $ \tau_\beta \phi_{\beta \gamma} \rho' $) and show that the opposite of any non-$ \rho' $ entry is an entry of a like cycle$ - $that is, entries of (\ref{xcycle}) and (\ref{ycycle}) are opposites and entries of (\ref{xbarcycle}) and (\ref{ybarcycle}) are opposites.  This implies that it's impossible to travel from one side of $ \gamma $ to the other  by only crossing edges of $ \alpha $ (or $ \beta $) and, therefore, $ \gamma $ is separating.

\indent Let $ 1 \leq i \leq 2k $ and consider $ \tau^i(y) $ (an entry of (\ref{xbarcycle})).  $ Q^N $ anticommutes with $ \tau $; thus, $ Q^N\tau^i(y)=\tau^{-i}(\overline{y})=\tau^{-i} \tau^{2k+1}(x)=\tau^{2k+1-i}(x) $.  Since $ -2k \leq -i \leq -1,\ 1 \leq 2k+1-i \leq 2k $, and so $ Q^N \tau^i(y) $ is an entry of (\ref{ybarcycle}).  Note that the $ \tau $-powers of $ x $ and $ y $ are in a dual relationship to one another:  the opposite of $ \tau(y) $ is $ \tau^{2k}(x) $, the opposite of $ \tau^2(y) $ is $ \tau^{2k-1}(x) $, and so on.  This implies that the opposite of any non-$ \rho' $ entry of (\ref{ybarcycle}) is a non-$ \rho' $ entry of (\ref{xbarcycle}).

\indent Now let $ 1 \leq i \leq 2l-3 $.  Then $ \tau^i(\overline{y}) $ is an entry of (\ref{xcycle}).  $ Q^N \tau^i(\overline{y})=\tau^{-i}(y)=\tau^{-i} \tau^{-2k-1}(\overline{x})=\tau^{2g-1-2k-1-i}(\overline{x})=\tau^{2l-2-i}(\overline{x}) $.  Since $ -2l+3 \leq -i \leq -1,\ 1 \leq 2l-2-i \leq 2l-3 $, so $ Q^N \tau^i(\overline{y}) $ is an entry in (\ref{ycycle}).  Again, we have the same dual relationship, proving the claim.  A similar argument applies for $ \beta $.

\indent Note that, since $ \gamma $ is separating, its arcs cannot be nested, since this would imply there is some region of the $ (8g-4) $-gon which is on both sides of $ \gamma $.  Thus $ \gamma $ realizes a decomposition of type $ (r,s,t,u) $.

\indent This concludes the proof of sufficiency.
\end{proof}

\begin{proposition}
In the case $ k=g-1 $ above, Condition 1 implies Condition 2.
\end{proposition}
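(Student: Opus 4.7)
The plan is to exploit the fact that $ k=g-1 $ makes the power $ 2k+1=2g-1 $ equal to the order of $ \tau $.  Since each cyclic factor of $ \tau $ has length $ n=2g-1 $, this forces $ \tau^{2k+1} $ to be the identity permutation, so equations (\ref{xtoyeqn}) and (\ref{atobeqn}) collapse to $ y=Q^N(x)=\overline{x} $ and $ b=Q^N(a)=\overline{a} $, exactly the identification remarked upon immediately after the statement of Theorem 5.1.

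Substituting these into equations (\ref{xtoaeqn})--(\ref{btoxeqn}) and using that $ Q^N $ is an involution, one obtains the four clean identities $ \sigma^{r-1}(x)=b $, $ \sigma^{u-1}(b)=y $, $ \sigma^{t-1}(y)=a $, and $ \sigma^{s-1}(a)=x $.  Because $ (\alpha,\beta) $ is minimally intersecting, $ \sigma $ is a single $ (8g-4) $-cycle, so the four anchor edges are encountered in the cyclic order $ x,b,y,a $ with successive $ \sigma $-gaps $ r-1,\ u-1,\ t-1,\ s-1 $, summing to $ r+s+t+u-4=(8k+8)-4=8g-4 $, consistent with the length of $ \sigma $.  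Moreover, since each of $ r,s,t,u $ is at least $ 4 $, every gap is at least $ 3 $, so the four anchors are pairwise distinct and genuinely occupy four distinct positions on the $ \sigma $-cycle.

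Condition 2 then follows immediately.  Take any two pairs $ (v,p),\ (w,q) \in \lbrace (x,r),(a,s),(y,t),(b,u) \rbrace $ with $ v \neq w $ (the degenerate case $ v=w $ cannot satisfy Condition 2 for any permutation and must be understood as excluded from its hypothesis), and suppose $ w=\sigma^i(v) $ with $ 0<i<p-1 $.  Then $ w $ would be an anchor lying strictly between $ v $ and its cordon-target $ \sigma^{p-1}(v) $ along the $ \sigma $-cycle.  But the arrangement above shows that for each anchor $ v $ the very next anchor clockwise already occurs at $ \sigma^{p-1}(v) $ itself, so no anchor sits strictly in between.  The hypothesis of Condition 2 is therefore never met, and the condition holds vacuously.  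The real content of the proof is simply the observation that $ \tau^{2k+1} $ becomes trivial when $ k=g-1 $; once this is used to unpack Condition 1, the cyclic positions of the anchors are completely determined, and no further obstacle arises.
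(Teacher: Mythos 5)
Your proof is correct and follows essentially the same route as the paper: both exploit that $\tau^{2k+1}=\tau^{2g-1}$ is trivial when $k=g-1$, reduce Condition 1 to the relations $\sigma^{r-1}(x)=b$, $\sigma^{u-1}(b)=y$, $\sigma^{t-1}(y)=a$, $\sigma^{s-1}(a)=x$, and use that $\sigma$ is a single $(8g-4)$-cycle (with $r+s+t+u-4=8g-4$) to conclude that no anchor can lie strictly inside another arc's span, so Condition 2 holds vacuously. The paper presents this as a case-by-case contradiction (an anchor lying between $x$ and $b$ would force some $\sigma^m$ with $0<m<8g-4$ to fix an edge), whereas you read off all four anchor positions on the cycle at once, but the underlying argument is the same.
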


\begin{proof}
As noted above, in this case, $ \overline{x}=y $ and $ \overline{a}=b $.  We show that no two arcs of $ \gamma $ intersect.

\indent The arc of $ \gamma $ that originates at $ x $ terminates at $ b $.  We will show that neither of the edges $ a $ or $ y $ can lie between $ x $ and $ b $; a similar argument will show that, for any pair of anchor points, neither of the other two edges can lie between them and, therefore, none of the arcs of $ \gamma $ cross.  Suppose $ a $ lies between $ x $ and $ b- $that is, $ a=\sigma^i(x),\ 1 \leq i \leq r-1 $ (odd).  Then $ \sigma^{s-1}(a)=\sigma^{i+s-1}(x) $; but then $ x=\sigma^{s-1}(a)=\sigma^{i+s-1}(x) $.  $ i+s-1<r+s-2 \leq 8k-2=8g-10 $ (since the $ t $- and $ u $-gons must be at least rectangles and so, at most, $ r+s=8k $).  But since the order of $ \sigma $ is $ 8g-4 $, this is a contradiction.

\indent Now suppose $ y=\sigma^i(x),\ 2<i \leq r-2 $ (even).  $ \sigma^{t-1}(y)=a=\sigma^{-(s-1)}(x) $, so

$ \sigma^{t+s-2}(y)=x $

$ \sigma^{t+s+i-2}(y)=y $.  But $ t+s+i-2 \leq r+s+t-4<8g-4 $, again a contradiction.

\indent As we showed in the Proof of Sufficiency, if $ \gamma $ is simple, then $ \gamma $ is separating, so in fact Condition 2 is satisfied vacuously.
\end{proof}

\begin{remark}
Just as we can assemble the filling permutations of an $ F_l $ and $ Z_k $ into an $ F_g $, we can use Theorem 5.1 to extract permutations corresponding to the constituent surfaces from the filling permutation for $ F_g $, and then use an appropriate relabeling function to recover the actual filling permutations for $ F_l $ and $ Z_k $.  As might be expected, we need two extraction processes and two relabeling functions:  one each for each of the cases $ k=g-1 $ and $ k<g-1 $.

\indent In the latter case, we can extract the $ Z_k $-permutation by using equations (\ref{xtoaeqn}), (\ref{atoyeqn}), (\ref{ytobeqn}), and (\ref{btoxeqn}) from Condition 1.  Let

$ \check{\sigma}=(x, \sigma(x), \ldots, \overline{a})(a, \sigma(a), \ldots, \overline{y}) $

$ (y, \sigma(y), \ldots, \overline{b})(b, \sigma(b), \ldots, \overline{x}) $.

\indent The cycles of $ \check{\sigma} $ have lengths $ r, s, t $, and $ u $, respectively.  Let $ i \in \lbrace x,a,y,b \rbrace $ be the positive odd edge and $ j $ the positive even edge.  Arrange the cycles of $ \check{\sigma} $ so that the first one ends with the odd edge that is \textit{not} $ i $ and the first entry of each subsequent cycle is the opposite of the last entry of the cycle before it.  Let $ \hat{\sigma} $ be the filling permutation for $ F_g $ with the interior entries of $ \check{\sigma} $ (i.e., the ones between the first and last entries) deleted.  We now define the appropriate disassembly function to relabel the entries of these two permutations.

\[D=\begin{cases}
v \mapsto v-max \lbrace0, i-4(g-k)+3 \rbrace,\ max \lbrace1, i-4(g-k)+4 \rbrace \leq v \leq i \ odd \\
v \mapsto v-max \lbrace0, j-4(g-k)+2 \rbrace,\ max \lbrace2, j-4(g-k)+4\rbrace \leq v \leq j\ even \\
v \mapsto v-4k,\ i+4k+2 \leq v \leq 4g-3\ odd\ or\ j+4k+2 \leq v \leq 4g-2\ even \\
w \mapsto w-i+4k+5,\ i \leq w \leq min \lbrace i+4k+2, 4g-3 \rbrace\ odd \\
w \mapsto w-i+4(k+g)+3,\ 1 \leq w \leq i-4(g-k)+4\ odd \\
w \mapsto w-j+4k+6,\ j \leq w \leq min \lbrace j+4k+2, 4g-2 \rbrace\ even \\
w \mapsto w-j+4(k+g)+4,\ 2 \leq w \leq j-4(g-k)+4\ even \\
v \mapsto D(v-4g+2)+4(g-k)-2, \ v>4g-2 \\
w \mapsto D(w-4g+2)-4k-4,\ w>4g-2
\end{cases}
\]

\indent Again, we denote the entries of the permutation corresponding to $ F_l $ by $ v $ and entries of the one corresponding to $ Z_k $ by $ w $.  The choice of notation for these permutations is meant to suggest the way one surface is attached to the other.  If we think of the ``points" of both the ``check" and ``hat" as representing the vertices that are cut out of each surface, then $ \check{\sigma} $ corresponds to the surface that is attached to the other, and $ \hat{\sigma} 
$ corresponds to the surface that receives the attachment.

\indent The sequence of odd $ w $s begins at $ i $ and includes the next $ 2k+1 $ consecutive \textit{positive} edges (since we want to skip over even entries, the last one will be $ i+4k+2 $).  There are two possibilities:  either we reach the end of this sequence before running out of positive edges, or we don't.  In the former case, all the positive odd entries from 1 up to (and including) $ i $ will be $ v $s, the entries from $ i $ up to $ i+4k+2 $ are $ w $s, and the entries from $ i+4k+2 $ to $ 4g-2 $ are $ v $s:  the sequence of $ Z_k $-entries is ``sandwiched" between sequences of $ F_l $-entries.  In the latter case, when we run out of positive edges ($ i \leq w \leq 4g-3 $), we wrap around to the beginning, continue until we complete the sequence of $ w $s ($ 1 \leq w \leq i-4(g-k)+4 $), and then begin the sequence of $ v $s ($ i-4(g-k)+4 \leq v \leq i $):  here, the $ F_l $-sequence is sandwiched between $ Z_k $-sequences.  Accordingly, lines 5 and 7 will only be used if $ 4g-3<i+4k+2 $, and line 3 will only be used if $ i+4k+2 \leq 4g-3 $.  A similar observation applies to even entries.

\indent In the case $ k=g-1 $, we will only have $ \check{\sigma} $, since we can take $ \hat{\sigma} $ to be $ (1,2,3,4) $.  The difficulty is that $ F_g $ has $ 8g-4 $ edges, while $ Z_{g-1} $ has $ 8(g-1)+8=8g $ edges.  The edges $ x,a,y $, and $ b $ each serve as both initial and terminal edges.  In order to construct $ \check{\sigma} $, then, we have to add duplicate copies of each of these edges.  Let $ \tilde{x}, \tilde{a}, \tilde{y} $, and $ \tilde{b} $ denote the left end (with respect to orientation) of their respective edges, and let $ x,a,y $, and $ b $ denote the right end.  A positive edge is decorated when it appears as a terminal edge and undecorated when it's an initial edge, and vice versa for negative edges.  Construct $ \check{\sigma} $ just as described above, decorating edges as appropriate.  Let $ i $ and $ j $ be as above.  Then

\[\tilde{D}=\begin{cases}
w \mapsto w-i+4k+5,\ i \leq w \leq 4k+1 \ odd \\
w \mapsto w-i+8k+7,\ 1 \leq w \leq i-2\ odd \\
\tilde{i} \mapsto 8k+7 \\
w \mapsto w-j+4k+6,\ j \leq w \leq 4k+2\ even \\
w \mapsto w-j+8k+8,\ 2 \leq w \leq j-2\ even \\
\tilde{j} \mapsto 8k+8 \\
w \mapsto w-i-4k-1,\ i+4k+2 \leq w \leq 8k+3\ odd \\
w \mapsto w-i+1,\ 4k+3 \leq w \leq i+4k\ odd \\
\overline{\tilde{i}} \mapsto 4k+3 \\
w \mapsto w-j-4k,\ j+4k+2 \leq w \leq 8k+4\ even \\
w \mapsto w-j+2,\ 4k+4 \leq w \leq j+4k\ even \\
\overline{\tilde{j}} \mapsto 4k+4
\end{cases}
\]
\begin{flushright}
$ \square $
\end{flushright}
\end{remark}

\begin{example} For the $ F_6 $ above, we had

$ x=23 $

$ a=38 $

$ y=1 $

$ b=16 $. \\

\indent Then $ \check{\sigma}=(\tilde{38},35,8,17,22,23)(1,2,11,28,25,44,19,18,43,38)(16,41,42,\tilde{1}) $

$ (\tilde{23},40,21,20,39,10,7,34,27,6,13,36,29,12,9,24,3,26,31,14,15,30, $

$ 33,4,5,32,37,\tilde{16}) $.\\

$ i=1, j=16 $.  After relabeling, we get

$ \tilde{D}(\check{\sigma})^{-1}=(1,32,41,40,13,24)(48,15,18,29,36,11,16,39,46,9,12,27,10, $

$ 33,44,7,22,37,38,5,20,31,42,17,30,45,4,23)(47,6,19,26) $

$ (2,21,28,43,8,3,14,35,34,25) $, a filling permutation for a $ Z_5 $.

\indent We originally constructed this $ F_6 $ as $ F_3\ \sharp\ Z_3 $, and we just performed a decomposition as $ F_1\ \sharp\ Z_5 $.  This suggests that we should find a decomposition $ F_3=F_1\ \sharp\ Z_2 $.  Indeed:

$ Q^{10} \sigma_F^7(1)=4 $

$ Q^{10} \sigma_F^3(4)=11 $

$ Q^{10} \sigma_F^7(11)=14 $

$ Q^{10} \sigma_F^3(14)=1 $.  Since $ k=g-1 $, we have two pairs of opposite edges. \\
\\

Then $ \check{\sigma_F}=(\tilde{14}, 5, 10, 11)(1, 2, 13, 20, 7, 6, 19, 14)(4, 17, 18, \tilde{1}) $

$ (\tilde{11}, 16, 9, 8, 15, 12, 3, \tilde{4}) $ \\

After relabeling, we have $ \zeta'=\tilde{D}(\check{\sigma_F})^{-1}=(1, 20, 17, 12) $

$ (24, 15, 10, 5, 18, 21, 4, 11)(23, 6, 7, 14)(2, 9, 16, 19, 8, 3, 22, 13) $.

\indent We are now able to answer the first of our questions from Section 1.  $ \zeta' $ defines some surface $ Z_2 $, and

$ \zeta=(1, 10, 15, 20, 17, 22, 3, 12)(24, 5, 18, 11)(23, 16, 9, 6, 7, 4, 21, 14) $

$ (2, 19, 8, 13) $ defines our original $ Z $-piece.  We will show that the two surfaces are not homeomorphic.  Both of them have the same type ($ (8,4,8,4) $), and both have two vertices that are adjacent to all four regions.  We can identify these vertices by their left edges$ - $that is, the edges to the left of the identified points on the union of polygons.  By the design of the labeling process, the green vertices of both surfaces will have left edges $ \lbrace 11, 12, 13, 14 \rbrace $.  On $ Z_2 $, the other vertex has left edges $ \lbrace 5, 6, 19, 20 \rbrace $.  Since a homeomorphism must preserve adjacency, the edges $ \lbrace 11, 12, 13, 14 \rbrace $ of $ Z $ must either be mapped to the identical set of edges of $ Z_2 $ (in some order), or to $ \lbrace 5, 6, 19, 20 \rbrace $ (in some order).  No such reassignment can be realized by conjugation by an element of the group of twisting permutations $ T<\Sigma_{24} $ defined in Proposition 2.1, and so the surfaces $ Z $ and $ Z_2 $ are not homeomorphic.  Notice that these three decompositions ($ F_6=F_3\ \sharp\ Z_3 $, $ F_3=F_1\ \sharp\ Z_5 $, and $ F_3=F_1\ \sharp\ Z_2 $) imply that $ Z_5=Z_2\ \sharp\ Z_3 $:  we can produce higher-genus $ Z $s by connect-summing lower-genus $ Z $s.  \indent \indent \indent \indent \indent \indent \indent \indent \indent \indent \indent \indent \indent \indent \indent \indent \indent \indent \indent \indent \ \ $ \square $

\end{example}

\indent We close with the observation that assembly and disassembly aren't perfectly inverse processes.  For example, if we apply $ A $ to $ \tilde{D}(\check{\sigma}) $, and then splice it's cycles into a relabeled $ (1,2,3,4) $, we get

$ \sigma'=(1,10,11,36,25,30,19,4,43,24,35,16,17,8,23,26,21,6,39,18,7, $

$ 42,27,14,13,44,29,20,9,32,3,34,31,22,15,38,33,12,5,40,37,2,41,28) $.\\

For comparison, we recall that the original filling permutation for this $ F_6 $ was 

$ \sigma=(1,2,11,28,25,44,19,18,43,38,35,8,17,22,23,40,21,20,39,10,7, $

$ 34,27,6,13,36,29,12,9,24,3,26,31,14,15,30,33,4,5,32,37,16,41,42) $.

Though the two permutations are different, their odd entries are the same and their even entries can be cyclically permuted to one another (in fact, $ \delta^{-1} \sigma' \delta=\sigma $, where $ \delta $ is as defined in Proposition 2.1).  In assembling an $ F_g $ from an $  F_1 $ and $ Z_{g-1} $, the only choice of vertex in $ F_1 $ gives $ i=1 $ and $ j=2 $ (in the definition of $ A $), which means that we must have the same values for $ i $ and $ j $ in the definition of $ \tilde{D} $.  In disassembling $ \sigma $, we indeed had $ i=1 $ (which is why the odd edges of $ \sigma $ and $ \sigma' $ agree), but $ j=16 $.  In general, if $ \sigma $ is a filling permutation that admits a decomposition, and $ \sigma' $ is a filling permutation produced by disassembling and then reassembling $ \sigma $, then $ \sigma $ and $ \sigma' $ are conjugate by a power of $ \delta $ (cycling the labels of $ \beta $), a power of $ \kappa $ (cycling the labels of $ \alpha $), or both.


\begin{thebibliography}{9}
\bibitem[AH]{AH} T. Aougab, S. Huang.  \textit{Minimally Intersecting Filling Pairs on Surfaces}.  Algebraic and Geometric Topology 15, 903-932, 2015.
\bibitem[F]{F} B. Farb.  \textit{Relatively Hyperbolic Groups}.  Geometric and Functional Analysis 8, 1-31, 1998.
\bibitem[Ha]{Ha} W. J. Harvey.  \textit{Boundary structure of the modular group.  Riemann surfaces and related topics: Proceedings of the 1978 Stony Brook Conference (State Univ. New York, Stony Brook, N.Y., 1978)}, 245–251, Ann. of Math. Stud., 97, Princeton Univ. Press, Princeton, N.J., 1981. ISBN 0-691-08264-2.
\bibitem[He]{He} J. Hempel.  \textit{3-manifolds  as  viewed  from  the  curve  complex}.  Topology, 40(3):631-657, 2001.
\bibitem[MM]{MM} H. A. Masur and Y. N. Minsky.  \textit{Geometry of the complex of curves. I.
Hyperbolicity}.  Invent. Math., 138(1):103-149, 1999.
\end{thebibliography}
\end{document}